\numberwithin{equation}{section}
\DeclareMathAlphabet{\mathpzc}{OT1}{pzc}{m}{it}
\newtheorem{proposition}{Proposition}[section]
\newtheorem{definition}[proposition]{Definition}
\newtheorem{lemma}[proposition]{Lemma}
\newtheorem{theorem}[proposition]{Theorem}
\newcommand{\I}{\left(0,\frac \pi 2\right)}
\DeclareMathOperator{\V}{\mathcal V^\infty}
\DeclareMathOperator{\Tan}{\operatorname {Tan}}
\DeclareMathOperator{\Nor}{\operatorname{Nor}}
\DeclareMathOperator{\supp}{supp}
\DeclareMathOperator{\z}{(zero  \ section)}
\newcommand{\restrict}[2]{\left. #1 \right|_{#2}}
\newcommand{\R}{\mathbb{R}}
\newcommand{\Rn}{\mathbb{R}^n}
\newcommand{\Rk}{\mathbb{R}^k}
\newcommand{\Rl}{\mathbb{R}^l}
\newcommand{\Rm}{\mathbb{R}^m}
\newcommand{\pol}{\mathcal{P}}
\newcommand{\m}{\mathcal{M}}
\newcommand{\eps}{\epsilon}
\newcommand\proj{\operatorname{proj}}
\newcommand{\ddt}{{\frac d{dt}}}
\newcommand{\ddtzero}{\restrict{\frac d{dt}}{t=0}}
\newcommand \inv{^{-1}}
\newcommand\piinv{\pi^{-1}}
\newcommand \lcur{[\![} 
\newcommand\rcur{]\!]}
\newcommand \cur[1]{\lcur #1\rcur} 
\newcommand \meas[1]{[ #1]}
\newcommand\C{\mathcal C}
\title{Intersection theory and the Alesker product}
\author{Joseph H.G. Fu}
 \address{ Department of Mathematics, 
University of Georgia, 
Athens, GA 30602, USA}
\email{fu@math.uga.edu}
\date{\today}                                          
\thanks{ Partially supported by NSF grant DMS-1406252.}
\begin{document}
\begin{abstract}
Alesker has introduced the space $\V(M)$ of  {\it smooth valuations} on a smooth manifold $M$, and shown that it admits a natural commutative multiplication. Although Alesker's original  construction is highly technical, from a moral perspective this product is simply an artifact of the operation of intersection of two sets.
Subsequently Alesker and Bernig gave an expression for the product in terms of differential forms. We show how the Alesker-Bernig formula arises naturally from the intersection interpretation, and apply this insight to give a new formula for the product of a general valuation with a valuation that is expressed in terms of intersections with a sufficiently rich family of smooth polyhedra.

%, with the consequence that the space $\cc(M)$ of smooth curvature measures on $M$ is naturally a module over $\V(M)$. 
%Associated to any Riemannian structure on $M$ there is a subalgebra $\LK(M) \subset \V(M)$, isomorphic to a truncated polynomial algebra, spanned by the valuations arising from the Weyl tube formula. Meanwhile, certain classical curvature integrals for Riemannian manifolds give rise to a subspace $\rr(M) \subset \cc(M)$. We show that $\rr(M)$ is naturally a module over $\LK(M)$. In fact this module is the image of a certain universal module over the polynomial algebra $\R[t]$. We determine the structure of this universal module and give some applications.

\end{abstract}

\maketitle
%\tableofcontents

\setcounter{tocdepth}{200}

\section{Introduction} 
S. Alesker has introduced the space $\V(M)$ of {\it smooth  valuations}  on a smooth manifold $M$, whose elements are finitely additive set functions defined for sufficiently regular compact subsets of a smooth manifold.  Furthermore $\V(M)$ carries a natural multiplicative structure, for which the Euler characteristic $\chi$ acts as the identity element.  These ideas provide a language that has transformed modern integral geometry (cf. \cite{bernig1, conv, hig, bfs, futocome}).

%\subsection{The predicament} 
The basic idea behind the Alesker product is very simple. Given a sufficiently regular subset $X\subset M$, we may define the functional $\nu_X$ by
$$
\nu_X(A):= \chi(\,A \cap X).
$$
Although this can only be defined for subsets $A$ that meet $X$ in a nice way, it is clear that $\nu_X$ is indeed finitely additive under this restriction. Thus $\nu_X$ is not a smooth valuation, but only a {\it generalized valuation} in the sense of \cite{ale-be09}. On the other hand, a basic principle  states that a smooth valuation may be approximated by linear combinations of valuations of this form (if $M$ is a finite dimensional real vector space and attention is restricted to the translation-invariant elements of $\V(M)$ then this is a famous conjecture of P. McMullen, proved by Alesker in \cite{ale01}).
If we now define
\begin{equation}\label{eq:heuristic}
\nu_X \cdot \nu_Y := \nu_{X\cap Y},
\end{equation}
or more generally
\begin{equation}\label{eq:heuristic2}
(\nu_X \cdot \mu)(A): = \mu(\,A \cap X),
\end{equation}
then the idea behind the procedure of \cite{ale01,ale04,alefu} is to show that this product extends by linearity and continuity to all of $\V(M)$. 

Following Alesker we use the term {\it smooth polyhedron} to refer to a properly embedded smooth submanifold of $M$ with corners. By definition a {\it smooth valuation} is given in terms of a pair $\gamma,\beta$ of differential forms living respectively on $M$ and on its cosphere bundle $S^*M$, and assigns to any compact smooth polyhedron $A \subset M$ the sum of an interior term $\int_A \gamma$ and a boundary term $\int_{N(A)}\beta$. Here $N(A) \subset S^*M$ is the {\it conormal cycle } of $A$,  the manifold of  local supporting tangent hyperplanes for $A$. Alesker and Bernig \cite{ale-be09}  distilled the definition of the Alesker product of two smooth valuations into a formula involving only the differential forms underlying the factors and deduced the relation \eqref{eq:heuristic}.

However, the technicalities thus introduced, both in Alesker's original approach and in the approach of Alesker-Bernig, are significant, to the point where the basic simplicity of the construction of the product is obscured. For example, in the study \cite{bfs} of kinematic formulas in complex space forms a rather obscure argument was needed to prove the following essentially simple fact. Recall that $(M,G)$ is called a {\it Riemannian isotropic space} if $M$ is Riemannian and $G$ is a Lie group of isometries of $M$ that acts transitively on the tangent sphere bundle. Denote by $dg $ the Haar measure of $G$.
\begin{theorem}[\cite{bfs}, Theorem A.1]\label{thm:bfs}
Let $(M,G)$ be an isotropic space,  $X$  a compact smooth polyhedron, and $\rho \in C^\infty(G)$. Then
\begin{equation}\label{eq:g nu def}
\nu(A):= \int_G \chi(A \cap gX) \, \rho(g) \, dg 
\end{equation}
defines a smooth valuation on $M$.
If $\phi \in \V(M)$ then the Alesker product of $\phi$ and $\nu$ is given by
\begin{equation}\label{eq:G asymmetric product}
(\phi\cdot \nu) (A)= \int_G \phi (A \cap gX) \, \rho(g)\, dg.
\end{equation}
\end{theorem}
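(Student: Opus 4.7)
The plan is to derive both assertions from the intersection heuristic \eqref{eq:heuristic2} by realizing $\nu$ as the $G$-convolution of $\rho$ with the family of generalized intersection valuations $\nu_{gX}$. Smoothness of $\nu$ will be a consequence of isotropy, which supplies enough transitivity to guarantee that the average is represented by smooth differential forms on $M$ and on $S^*M$. The product formula then follows by passing the Alesker product past the $G$-integral and invoking \eqref{eq:heuristic2} pointwise for generic $g$.

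For the first assertion, I would represent $\nu_{gX}$ by the current of integration $\cur{gX}$ together with the conormal cycle $N(gX)$, each obtained from the corresponding objects for $X$ by pushforward under $g$. The convolutions
\[
\int_G \rho(g) \, g_* \cur{X} \, dg \qquad \text{and} \qquad \int_G \rho(g) \, g_* N(X) \, dg
\]
are, by isotropy, smooth currents on $M$ and on $S^*M$ respectively: $G$ acts transitively on both $M$ and $S^*M$, so the orbit map of any point of $X$ (resp.\ of any covector of $N(X)$) is a submersion, and the pushforward of a smooth density along a submersion is a smooth form. (Properness of the action makes the effective range of integration in $g$ compact whenever we localize to a compact piece of $M$ or $S^*M$.) These smooth forms serve as the defining data of $\nu \in \V(M)$.

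For the second assertion, the heuristic $(\phi \cdot \nu_{gX})(A) = \phi(A \cap gX)$ holds whenever $A$ and $gX$ meet transversally; a standard kinematic transversality argument for isotropic actions guarantees this for almost every $g \in G$. I would then apply the Alesker--Bernig formula to $\phi$ and the explicit forms for $\nu$ produced above, exchange the $G$-integration with the fiber integrals appearing in that formula, and recognize the inner integrand (for $g$ in a full-measure subset) as precisely the Alesker--Bernig expression for $\phi \cdot \nu_{gX}$ evaluated on $A$, which by the heuristic equals $\phi(A \cap gX)$. Integrating in $g$ then yields \eqref{eq:G asymmetric product}.

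The principal obstacle is making this interchange of integration rigorous: $\nu$ is a genuine smooth valuation while each $\nu_{gX}$ is only generalized, and the boundary component of the product formula involves a delicate fiber integral over the joint conormal structure of $A$ and $gX$ whose absolute integrability in $g$ must be verified. I expect isotropy, properness of the $G$-action on $M$ (which makes the effective support of the integrand in $g$ compact), and the transversality statement to combine to furnish the required uniform bounds, after which Fubini finishes the argument. This is precisely the kind of bookkeeping that the intersection-theoretic viewpoint developed in the body of the paper is designed to streamline.
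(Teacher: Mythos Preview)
Your proposal has a genuine gap in the first assertion. The smoothed currents you construct have the wrong degrees to serve as defining data for $\nu$: the averaged conormal cycle $\int_G \rho(g)\, g_* N(X)\, dg$ is a current of dimension $n-1$ on $S^*M$, hence is represented by a form $\omega \in \Omega^{n}(S^*M)$, while the averaged $\cur{X}$ is represented by a function $f \in C^\infty(M)$. But a smooth valuation is given by a pair $(\beta,\gamma) \in \Omega^{n-1}(S^*M) \times \Omega^n(M)$, so $(\omega,f)$ cannot be plugged in directly. The pair $(\omega,f)$ does determine $\nu$, but only through the characterization $\Delta_\nu = (-1)^n s^*\omega$, $\mathcal F_\nu = f$, and establishing that identification is real work (the paper does it via a Morse-theoretic argument, Theorem~\ref{thm:nu chi}).

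What is missing is precisely the decomposition you allude to only at the very end, and only for the product: already to show that $\nu$ is smooth one must split $N(A\cap gX)$ into the piece of $N(A)$ lying over the interior of $gX$, the piece of $N(gX)$ lying over the interior of $A$, and the interpolating arcs over $\partial A \cap \partial(gX)$. Averaging the first two pieces in $g$ produces the terms $\pi^*f\cdot\beta$ and $\pi_*(\beta\wedge\omega)$, but the third piece is the heart of the matter: it is handled via the auxiliary space $\Sigma$ of Section~5 and yields the fiber-integral term $\xi_*(\zeta^*\beta \wedge \eta^*\omega)$ in the actual defining form $\theta$ of $\nu$ (Theorem~\ref{thm:well defined}). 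Without this construction you have neither the smoothness of $\nu$ nor the absolute integrability in $g$ that you correctly flag as the principal obstacle. Once these explicit forms are in hand, the product formula follows not by a Fubini argument on generalized valuations $\nu_{gX}$ but by recognizing that the four terms of the Alesker--Bernig formula for $\phi\cdot\nu$ coincide termwise with those same four terms computed with $\chi$ replaced by $\phi$ (Theorem~\ref{thm:main}).
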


\subsection*{Results of this paper} 
The main result of the present paper (actually an immediate consequence of Theorems \ref{thm:well defined} and \ref{thm:main}) generalizes Theorem \ref{thm:bfs}, as follows. We replace $M$ by a general smooth oriented manifold, $(G, dg)$ by a smooth oriented parameter manifold $P$ equipped with a smooth signed volume form $dp$, and the action of $G$ on $M$ by a smooth family of orientation-preserving diffeomorphisms $\{\varphi_p\}_{p \in P}$ of $M $. We assume that this apparatus is proper in a sense generalizing the usual notion of a proper group action, and that the induced diffeomorphisms $\tilde \varphi_p$ of the cosphere bundle $S^*M$ satisfy the condition that for any $\xi \in S^*M$ the map $P \to S^*M$ given by $p \mapsto \tilde \varphi_p(\xi)$ is a submersion. Under these conditions we say that $(P,\varphi,dp)$ is an {\it admissible} measured family of diffeopmorphisms of $M$, generalizing the Riemannian isotropic condition (cf. Definition \ref{def:kin val} \eqref{item:admissible} below). 
\begin{theorem}\label{thm:featured} Let $(P,\varphi,dp)$ be an admissible measured family of diffeomorphisms of $M$, and $X \subset M$ a compact smooth polyhedron. Then the set function
\begin{equation}\label{eq:kin val 0}
\nu: A \mapsto \int_P \chi(A \cap \varphi_p(X)) \, dp
\end{equation}
defines a smooth valuation on $M$. Given another valuation $\mu \in \V(M)$, we have
\begin{equation}
(\mu \cdot \nu)(A):= \int_P \mu(A \cap \varphi_p(X)) \, dp.
\end{equation}
\end{theorem}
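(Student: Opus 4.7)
The plan is to treat both assertions as one current-theoretic construction: the smooth valuation $\nu$ will arise as a fiber integral over $P$ of the generalized valuations $\nu_{\varphi_p(X)} \colon A \mapsto \chi(A\cap \varphi_p(X))$, and the product formula will follow from the heuristic identity \eqref{eq:heuristic2} applied pointwise in $p$ and integrated via Fubini. The submersion hypothesis on $p \mapsto \tilde\varphi_p(\xi)$ is precisely the condition required to regularize the fiber integral so that the output is smooth rather than merely generalized.

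To establish that $\nu \in \V(M)$, I would first recall that for a compact smooth polyhedron $Y$ transverse to $A$, the Euler characteristic $\chi(A \cap Y)$ decomposes as the sum of an interior contribution supported on $A\cap Y$ and a boundary contribution obtained by pairing the conormal cycle $N(A)$ against $N(Y)$ inside $S^*M$. Applying this with $Y = \varphi_p(X)$ and using $N(\varphi_p(X)) = \tilde\varphi_{p\ast} N(X)$, the integrand in \eqref{eq:kin val 0} takes a currential form. I would then push the product currents $dp \wedge [X]$ and $dp \wedge N(X)$ forward under the evaluation maps $P \times X \to M$, $(p,x) \mapsto \varphi_p(x)$, and $P \times N(X) \to S^*M$, $(p,\xi) \mapsto \tilde\varphi_p(\xi)$, respectively. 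The properness assumption on $(P,\varphi,dp)$ ensures each push-forward is a well-defined current, while the submersion hypothesis forces the second push-forward to be represented by a smooth differential form $\beta$ on $S^*M$; similarly the first produces a smooth top form $\gamma$ on $M$. These $\gamma$ and $\beta$ are the data representing $\nu$ as a smooth valuation.

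For the product formula, I would apply the Alesker-Bernig formula to $\mu$ and $\nu$, writing $\mu \cdot \nu$ in terms of pullbacks and wedge products of the forms representing the two factors. Since the forms $\gamma, \beta$ representing $\nu$ are themselves fiber integrals over $P$, Fubini permits the integration in $p$ to be moved outside the Alesker-Bernig operations. For each fixed $p$, the resulting inner expression computes the value of $\mu$ on $A \cap \varphi_p(X)$ via the heuristic \eqref{eq:heuristic2}, and re-integrating against $dp$ yields the desired identity.

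The main obstacle is rigorously justifying the Fubini exchange, because the Alesker-Bernig formula implicitly involves intersection operations on currents, such as $N(A)$ and $N(\varphi_p(X))$, that are individually singular and only acquire the transversality needed for the intersection to be meaningful after averaging in $p$. The submersion hypothesis resolves exactly this difficulty: since $\{\tilde\varphi_p\}$ moves $N(\varphi_p(X))$ transversely to $N(A)$ for $dp$-almost every $p$, the pointwise intersection is defined on a full-measure subset of $P$ and the corresponding integrand is integrable. With this in hand the currentwise manipulation underlying the Alesker-Bernig formula becomes compatible with integration against $dp$, and the formula falls out of the exchange.
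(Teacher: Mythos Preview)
Your overall strategy---smooth the currents $[X]$ and $N(X)$ by averaging over the admissible family, then match against the Alesker--Bernig formula via Fubini---is the paper's strategy as well. But there is a genuine gap at the step where you claim the pushed-forward currents directly represent $\nu$.

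The push-forward of $dp\wedge N(X)$ under $(p,\xi)\mapsto\tilde\varphi_p(\xi)$ is indeed a smooth form on $S^*M$, call it $\omega$ (the paper does exactly this). But $N(X)$ has dimension $n-1$ inside the $(2n-1)$-manifold $S^*M$, so $\omega$ has degree $n$, not $n-1$. It therefore cannot serve as the $\beta$ in a representation $\nu=\lcur\beta,\gamma\rcur$. What $\omega$ turns out to be is, up to sign and the antipodal map, the \emph{variation} $\Delta_\nu$; and the smoothed $[X]$ gives the point function $\mathcal F_\nu$. Proving even that much requires a separate Morse-theoretic argument that you do not mention.

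The deeper issue is that your phrase ``pairing $N(A)$ against $N(Y)$ inside $S^*M$'' hides the whole difficulty. When $A$ and $Y=\varphi_p(X)$ are transverse, $N(A)$ and $N(Y)$ are disjoint $(n-1)$-dimensional Legendrians, so there is no direct intersection pairing. The conormal cycle $N(A\cap Y)$ decomposes into \emph{three} pieces: $N(A)\cap\pi^{-1}Y$, $N(Y)\cap\pi^{-1}A$, and a join piece over $\partial A\cap\partial Y$ consisting of arcs in the cotangent spheres interpolating between the two outward conormals. Handling this third piece is the technical core of the paper: it requires an auxiliary $(3n-1)$-manifold $\Sigma$ parametrizing such arcs, with projections $\xi,\zeta,\eta:\Sigma\to S^*M$, and the actual representing form $\theta$ for $\nu_\mu$ involves the fiber integral $\xi_*(\zeta^*\beta\wedge\eta^*\omega)$. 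Only after this four-term decomposition is established can one recognize the result as the Alesker--Bernig expression for $\mu\cdot\nu_\chi$ and deduce the product formula by associativity. Your proposal skips this construction entirely.
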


A smooth valuation of the form \eqref{eq:kin val 0} will be called a {\bf principal kinematic valuation}.

The proof is accomplished by assigning a geometric meaning to each of the four terms that arise in the Alesker-Bernig formula in case one of the factors has the form \eqref{eq:kin val 0}. One is a straightforward interior term. The other three arise from a 
natural decomposition into three pieces of the normal cycle $N(A\cap X)$ of the  intersection of two smooth polyhedra $A,X$ in general position: the piece of $N(A)$ lying above the interior of $X$, the piece of $N(X)$ lying above the interior of $A$, and a piece that lies above the intersection of the boundaries of $A,X$  obtained by interpolating the family of arcs between the respective outward normals to $A,X$. This is accomplished in Theorems \ref{thm:well defined} and \ref{thm:nu chi}.
This decomposition is central to classical integral geometry, e.g. in the classical proof of the kinematic formula  given in  \cite{santalo}, III.15.4.

\subsection*{Concluding introductory remarks} A major component of our motive here is foundational and pedagogical. By analogy with the McMullen conjecture, it is conceivable that every smooth valuation is a principal kinematic valuation, or at any rate that the principal kinematic valuations are  dense in $\V(M)$ in a sense strong enough to imply that the product of two smooth valuations may be regarded as the limit of the products of sequences of approximating kinematic valuations (this is true of invariant valuations in an isotropic space; cf. \cite{bfs}, Corollary 2.18). If and when these statements are established it will be possible to view the Alesker-Bernig formula, which may even now be taken as the definition of the Alesker product, as a direct consequence of our intersection formulas. This would offer a number of expository advantages: for instance, the commutativity of the product would then be a direct consequence of Fubini's theorem.

As a final remark, a significant issue in the theory of valuations is the question of exactly how regular a compact subset $A \subset M$ must be in order to possess a conormal cycle and thereby to be amenable to pairing with a smooth valuation. For example, it is known that semiconvex or subanalytic sets enjoy this regularity, but it is also clear that a much wider and unfathomed range is possible (cf. e.g. \cite{pok-rat}). By the same token the subset $X$ used in the construction of the kinematic valuation $\nu$ may in principle be selected from this range. At the cost of introducing more sophisticated technology, all of the main results in the present paper extend to the framework of \cite{fu94}, which includes the cases where $X,A$ are semiconvex or even WDC in the sense of \cite{pok-rat} and \cite{FPR}; we intend to explain this point more fully elsewhere. The point of the present paper is to work entirely within the more familiar framework of the $C^\infty$ category, for the sake of greater accessibilty. Thus our hope is that the discussion will be comprehensible to any reader with a thorough grounding in the basic constructions of differential geometry. Because of this we have omitted, or only sketched, a number of proofs that appear to us to be straightforward from that perspective.

\subsection{Acknowledgements} It is a pleasure to thank the Institut f\"ur Diskrete Mathematik und Geometrie at the Technische Universit\"at of Vienna for their kind hospitality as the technical outline of this paper was sketched. I would also like to thank T. Wannerer for helpful conversations, as well as A. Bernig and G. Solanes for the collaborative project \cite{bfs} that made the urgency of this project felt. Thanks are due to S. Alesker, A. Bernig, D. Faifman,  F. Schuster, and G. Zhang for their helpful comments on an earlier version of this paper. 

\section{Currents and differential forms} We collect a few well known facts and fix notation and conventions. 
Throughout this paper
 $M$ will denote an oriented smooth manifold of dimension $n$. The orientation is not strictly necessary in order to develop the theory of smooth valuations, but it definitely simplifies the discussion.

 \subsection{Forms, currents, pre-images, intersections, fiber integration} \label{sect:general}Formally, our entire discussion revolves around the duality between currents and differential forms. We devote extra care to determining the signs of intersections.
 
\subsubsection{} Denote by $\Omega^k(M)$ the space of smooth differential forms of degree $k$ on $M$, and by $\Omega^k_c(M)$ the subspace of compactly supported forms. A  {\bf current of dimension $k$} on $M$ is a linear functional $\Omega_c^k(M)\to \R$ that is continuous with respect to $C^\infty$ convergence with uniformly compact support. We denote the pairing of a current $T$ with a differential form $\beta $ by $\langle T,\beta\rangle$; indeed we will adopt the same notation for the pairing of any vector and covector.
A properly embedded smooth oriented submanifold $X$ of dimension $k$ determines a current of dimension $k$ by 
$\beta \mapsto \int_X \beta.$
Following the convention of \cite{derham}, any  $\omega \in \Omega^{n-k}(M)$ determines a  current $T$ of dimension $k$ by
$\langle T,\beta\rangle:= \int_M   \omega\wedge \beta$. A current of this form is called {\bf smooth}, and $\omega$ is the {\bf associated differential form} to $T$.

\subsubsection{}\label{sect:orient intersect} Given smooth oriented submanifolds $X^{n-k},Y^{n-m} \subset M^n$ that intersect transversely, we denote by $X\bullet Y $ their oriented  intersection.  The convention determining the orientation is as follows,  equivalent to that of \cite{guil-pol}, Chapter 3.2. Given $x \in X\cap Y$ let $v_1,\dots,v_{m}, u_1,\dots, u_{n-k-m},w_1,\dots,w_k$  be a basis for $T_xM$ such that the $v_i,u_j \in T_xX$, the $u_j \in T_x(X\cap Y)$ and the  $u_j,w_l\in T_xY$. Then the orientation of $X\bullet Y$ is determined by the condition that the product of the orientations of these four ordered bases is $+1$.  If $m+k=n$ (i.e. the intersection is zero-dimensional) then this same convention defines the multiplicity of the intersection. In this case we put $\#(X\bullet Y)= \int_{X\bullet Y} 1$ for the sum of these multiplicities at the various points of intersection (provided they are finite in number). A straightforward calculation reveals that
\begin{equation}\label{eq:commute intersect}
X\bullet Y= (-1)^{mk} (Y\bullet X)
\end{equation}
in the obvious sense.

The symbol $\cap$ will be reserved for set-theoretic intersection.

\subsubsection{Fiber integration and slicing} \label{subsect:preimages} Suppose $M,M'$ are smooth oriented manifolds of dimensions $n,n+m$ respectively, and $\pi:M'\to M$ a smooth submersion. If $\beta \in \Omega^l(M')$ and $\restrict \pi{\supp \beta}$ is proper then there is a well defined fiber integral 
$$
\pi_*\beta \in \Omega^{l-m}(M)
$$
characterized uniquely by the condition
\begin{equation}\label{eq:def fiber int}
\int_M  \gamma \wedge(\pi_* \beta )= \int_{M'} \pi^*\gamma \wedge \beta
\end{equation}
for $\gamma \in \Omega_c^{n+m-l}(M)$. This agrees with the convention \eqref{eq:def fiber int} of \cite{ale-be09}.

By Ehresmann's fibration theorem, the fibers $F_x:=\pi\inv(x)$ may  be oriented so that $M'$ is locally identified with the  oriented product  $M\times F_x$ in the neighborhood of $F_x \subset M'$. If $X \subset M$ is a smooth oriented submanifold then its preimage $\pi\inv X \subset M'$ is then oriented so as to agree locally with the oriented products  $X\times F_x$. Thus $\pi\inv M = M'$ as oriented manifolds.

This orientation convention also entails the following lemma about ``slicing."
\begin{lemma}\label{lem:slicing} Let $\lambda:M'\to M$ be a smooth proper map, and  $N \subset M$ the subset of critical values of $\lambda$, i.e. the set of points where the derivative of $\lambda$ has rank less than $n$. Then $N$ has measure zero in $M$, and $M'\setminus \lambda\inv(N)$ is open. 

If $\gamma\in \Omega^n(M)$ is a smooth differential form of top degree, and $\beta \in \Omega_c^\ell(M')$, then for any properly embedded oriented  smooth submanifold $Y \subset M'$ of dimension $n+\ell$
\begin{equation}\label{eq:slicing}
\int_{Y} \lambda^* \gamma \wedge \beta = \int_{M\owns p} \left(\int_{Y \bullet\lambda\inv(p)} \beta \right) \, \gamma
\end{equation}
\end{lemma}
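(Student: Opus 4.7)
My plan is to dispatch the two topological claims quickly and then reduce the integral identity to a local Fubini computation on the regular set of $\lambda|_Y$.

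\textbf{Topological statements.} That $N$ has measure zero is Sard's theorem applied to $\lambda$. To see that $\lambda\inv(N)$ is closed (equivalently, its complement is open), note that the critical set $C:=\{x\in M':\operatorname{rank} d\lambda_x<n\}$ is closed in $M'$, and since $\lambda$ is proper its image $\lambda(C)=N$ is closed in $M$; hence $\lambda\inv(N)$ is closed.

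\textbf{Reduction to the regular set of $\lambda|_Y$.} The integral identity is local in nature, but to make the right side meaningful I would first apply Sard's theorem to the restriction $\lambda|_Y\colon Y\to M$: its set of critical values $N_Y\supset N$ has measure zero in $M$, and on $M\setminus N_Y$ the submanifolds $Y$ and $\lambda\inv(p)$ are automatically transverse, so $Y\bullet \lambda\inv(p)$ is a smooth oriented $\ell$-dimensional submanifold, and the inner integral is a bounded measurable function of $p$ with support in the compact set $\lambda(Y\cap\supp\beta)$. Conversely, at any $y\in Y$ where $d(\lambda|_Y)$ has rank less than $n$, the restriction of the top-degree form $\lambda^*\gamma\wedge\beta$ to $Y$ vanishes: evaluated on any $(n+\ell)$-frame in $T_yY$, every summand contains a factor $\gamma(d\lambda v_1,\ldots, d\lambda v_n)$ which is zero because the image of $T_yY$ under $d\lambda$ has dimension less than $n$. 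Hence the left side reduces to an integral over the open regular set $Y^{\mathrm{reg}}:=Y\setminus(\lambda|_Y)\inv(N_Y)$.

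\textbf{Local computation and globalization.} Cover $Y^{\mathrm{reg}}$ by coordinate charts adapted to $\lambda|_Y$: by the local form of a submersion, each $y\in Y^{\mathrm{reg}}$ admits coordinates $(x_1,\ldots,x_n,t_1,\ldots,t_\ell)$ on a neighborhood $U\subset Y$ and coordinates $(x_1,\ldots,x_n)$ on a neighborhood of $\lambda(y)$ in $M$, in which $\lambda|_U$ is the projection $(x,t)\mapsto x$. Writing $\gamma=g(x)\,dx_1\wedge\cdots\wedge dx_n$, only the vertical component $h(x,t)\,dt_1\wedge\cdots\wedge dt_\ell$ of $\beta|_U$ survives after wedging with $\lambda^*\gamma$, producing $g(x)h(x,t)\,dx\wedge dt$. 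Fubini's theorem then gives the required identity on $U$, and a partition of unity on $Y^{\mathrm{reg}}$ assembles the local identities into the global one.

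\textbf{Main obstacle.} The delicate point is the orientation bookkeeping: one must verify that the orientation on $(\lambda|_Y)\inv(p)$ induced by the coordinate splitting $(x,t)\mapsto x$ coincides with the orientation on $Y\bullet \lambda\inv(p)$ determined by the convention of Section~\ref{sect:orient intersect}, accounting for the codimensions involved ($Y$ has codimension $m-\ell$ in $M'$, while $\lambda\inv(p)$ has codimension $n$). This is easiest to check by extending the adapted chart on $U$ to coordinates on $M'$ of the form $(x_1,\ldots,x_n,t_1,\ldots,t_\ell,s_1,\ldots,s_{m-\ell})$ in which $Y=\{s=0\}$ and $\lambda\inv(p)=\{x=p\}$; the check is then mechanical, but signs must be tracked carefully to be consistent with the fiber-orientation convention of Section~\ref{subsect:preimages}.
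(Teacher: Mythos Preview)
Your approach is essentially identical to the paper's: Sard for the first claim, properness plus closedness of the critical set for the second, then reduce the integral identity to the regular set of $\lambda|_Y$ via Sard, argue that the integrand vanishes at critical points, and finish with the implicit function theorem and Fubini in adapted coordinates. Two small corrections: the inclusion $N_Y\supset N$ you assert is false in general (a critical value of $\lambda$ need not come from a point of $Y$), though you never actually use it; and your sentence ``the left side reduces to an integral over $Y^{\mathrm{reg}}=Y\setminus(\lambda|_Y)\inv(N_Y)$'' is not fully justified by the rank argument alone, since that argument only kills the integrand on the critical \emph{set} $C_Y$ of $\lambda|_Y$, not on all of $(\lambda|_Y)\inv(N_Y)$. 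The paper handles the remaining piece $(\lambda|_Y)\inv(N_Y)\setminus C_Y$ separately, noting that there $\lambda|_Y$ is a local submersion onto the measure-zero set $N_Y$, so the pullback of the top-degree form $\gamma$ integrates to zero.
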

\begin{proof} The first assertion is Sard's theorem. The second follows from the properness of $\lambda$ and the fact that the set $C \subset M'$ of critical points is closed.

To prove \eqref{eq:slicing}, we may apply the first paragraph with $M'$ replaced by $Y$ and $\lambda $ replaced by its restriction to $Y$. Put $N'$ for the set of critical values of $\restrict \lambda Y$. Then \eqref{eq:slicing} holds if $M$ is replaced by $M\setminus N'$ and $Y$ by $Y\setminus \lambda\inv(N')$: this follows from the implicit function theorem and the orientation convention above for preimages. Put $C' \subset Y$ for the set of critical points of $\restrict \lambda Y$. Then
\begin{equation*}
\int_{\lambda\inv(N') } \lambda^* \gamma \wedge \beta  = \left(\int_{\lambda\inv(N'\setminus C') } + \int_{C'}\right) ( \lambda^* \gamma \wedge \beta), 
\end{equation*}
where the first integral vanishes by the Sard's theorem and the implicit function theorem and the integrand of the second integral vanishes identically.
\end{proof}

\subsubsection{} Under these conventions one may easily check the following properties by restricting to the case that $M'$ is a product $M\times F$. 
\begin{lemma}\label{lem:preimages}{\ }
\begin{enumerate}
\item \label{item:pushdown intersect} If $X\subset M, Y\subset M'$ are smooth oriented submanifolds,
%such that $\dim X +\dim Y =n$
 and the restriction of $\pi $ to $Y$ is a diffeomorphism such that $X,\pi(Y)$ meet transversely, then
\begin{equation}\label{eq:intersect project}
\pi(Y\bullet\pi\inv X )=\pi(Y) \bullet X.
\end{equation}
\item If $X,Z \subset M$ are oriented submanifolds intersecting transversely then
\begin{equation}\label{eq:pullback intersect}
\pi\inv (X\bullet Z) = (\pi\inv X)\bullet (\pi\inv Z).
\end{equation}
\item
If $\beta \in \Omega_c^{l}(M')$ and $\dim X = l-m$ then 
 \begin{equation} \label{eq:fiber int over z}
 \int_X  \pi_* \beta = \int_{\pi\inv X} \beta.
 \end{equation}
\item The current defined by integration over $X$ may be approximated weakly by a sequence of currents defined by differential forms $\omega_1,\omega_2,\dots$, with supports converging in the Hausdorff metric topology to $X$. The current defined by integration over $\pi\inv X$ is then the weak limit of the sequence of currents defined by the differential forms $\pi^* \omega_1,\pi^*\omega_2,\dots$, with supports converging to $\pi\inv X$.
\item Given another oriented manifold $M''$ and a second submersion $\lambda:M''\to M'$ we have the identity of oriented manifolds
\begin{equation}\label{eq:preimage composition}
\lambda\inv(\pi\inv X) = (\pi\circ\lambda)\inv X.
\end{equation}
%\item Suppose in addition that  $\pi:M'\to M$ is a fiber bundle $\mathcal B$, $L$ is a smooth manifold, and $\zeta:L \to M$ is a smooth map. Put $L'$ for the total space of the pullback bundle $\zeta^*\mathcal B$, with projection $\lambda:L'\to L$ and induced map $\zeta':L'\to M'$. Let $Z \subset L, X \subset M$ be  smooth oriented submanifolds such that $\restrict \zeta Z$ is an orientation-preserving diffeomorphism onto $X$. Then
%$\restrict {\zeta'}{\lambda\inv Z}$ is an orientation-preserving diffeomorphism onto $ \pi\inv X$.
\end{enumerate}
\end{lemma}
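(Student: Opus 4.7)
The plan is to verify each of the five assertions in the local product model $M' = M \times F$ and then assemble the global statement via a partition of unity. By Ehresmann's fibration theorem, every point of $M'$ admits a neighborhood diffeomorphic to $U \times F_x$ with $U \subset M$ open and $\pi$ identified with projection to the first factor; the orientation convention of \S\ref{subsect:preimages} is precisely what is needed for this identification to preserve orientations. Parts (1), (2), (5) concern oriented submanifold structure, which is local in $M'$; parts (3), (4) involve compactly supported forms or currents and may therefore be decomposed via a partition of unity subordinate to a cover by such product charts. In each case it thus suffices to work in the product setting.

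For (1), since $\restrict{\pi}{Y}$ is a local diffeomorphism to its image, in the product model $Y$ is the graph $\{(x, s(x)) : x \in U\}$ of a smooth map $s : U \to F$ defined on an open subset $U \subset \pi(Y)$. Hence $Y \cap \piinv X = \{(x, s(x)) : x \in U \cap X\}$, which maps bijectively under $\pi$ to $\pi(Y) \cap X$. At a point $x \in \pi(Y) \cap X$, I would take a basis of $T_xM$ adapted to $\pi(Y) \bullet X$ as in \S\ref{sect:orient intersect}, lift the $T_x \pi(Y)$ subframe via $ds$ into $T_{(x,s(x))}Y$, leave the complementary $T_xX$ subframe inside $T_xM \subset T_{(x,s(x))}(M \times F)$, and append a positively oriented basis of $T_{s(x)}F$. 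A direct sign check against the rule of \S\ref{sect:orient intersect} then shows the fiber factors contribute $+1$ on both sides, so the orientation of $Y \bullet \piinv X$ pushed forward by $\restrict{\pi}{Y}$ agrees with that of $\pi(Y) \bullet X$.

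For (2), the product model gives $\piinv X \cap \piinv Z = (X \cap Z) \times F$, and an adapted frame at $x \in X \cap Z$ followed by a positive frame of $T_fF$ simultaneously witnesses the intersection convention on both sides of \eqref{eq:pullback intersect}. For (3), a partition of unity reduces the claim to a single product chart, where \eqref{eq:def fiber int} together with Fubini's theorem gives $\int_X \pi_*\beta = \int_{X \times F}\beta$. For (4), pairing $\pi^*\omega_i$ with an arbitrary compactly supported test form $\alpha$ on $M'$ of complementary degree and applying \eqref{eq:def fiber int} yields $\int_{M'} \pi^*\omega_i \wedge \alpha = \int_M \omega_i \wedge \pi_*\alpha$, which converges to $\int_X \pi_*\alpha = \int_{\piinv X} \alpha$ by the weak convergence hypothesis and by (3); the support claim follows from $\supp(\pi^*\omega_i) = \piinv(\supp \omega_i)$. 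Finally, (5) is immediate from the functoriality of set-theoretic preimages combined with the observation that iterated product orientations associate.

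The only step requiring genuine thought is the orientation bookkeeping in (1); the remaining assertions are routine Fubini- or functoriality-style reductions once one is working in the product model.
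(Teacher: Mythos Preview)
Your proposal is correct and follows essentially the same route as the paper, which simply says that all five properties ``may easily [be] check[ed] by restricting to the case that $M'$ is a product $M\times F$''; you have supplied the routine details the paper omits. The one small divergence is in part~(5): the paper deduces it from the approximation property~(4) together with the functoriality $(\pi\circ\lambda)^* = \lambda^*\circ\pi^*$ of pullbacks of forms, whereas you verify the orientation directly via associativity of product orientations---either argument is fine.
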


The last statement  follows from the approximation property above and the corresponding fact about differential forms.

\subsection{Smoothing a current by a family of diffeomorphisms}  

We now describe a  procedure  for smoothing a current given by integration over a submanifold that will be central to the rest of the paper.
 
\begin{lemma} \label{lem:basics}
Let $P$ be an oriented smooth  manifold, equipped with a smooth signed volume form $dp$, and $\phi:P\times M \to M$ a smooth map such that 
\begin{itemize}
\item each $\phi_p:= \phi(p,\cdot)$  is a diffeomorphism
\item for each $x \in M$, the map $\phi^x:= \phi(\cdot,x)$  is a submersion
\item the restriction to $\supp dp \times M$ of the map $(p,x) \mapsto (\phi_p(x), x)$ is proper.
\end{itemize}
Suppose $\pi:M'\to M$ is a submersion from a second smooth oriented manifold $M'$, with $\dim M'= N$.
% and that there is a smooth map $\phi':P\times M' \to M'$ that is intertwined with $\phi$ via $\pi$. 
Fix  a properly embedded oriented smooth submanifold  $X \subset M$ of codimension $k$. Then:
\begin{enumerate}
\item\label{item:basic2} The linear functional  $\beta \mapsto \int_P\int_{ \phi_{p}( X)}\beta \, dp $ is a well defined smooth current  on $M$ of dimension $n-k$. Put $\omega\in \Omega^{k}(M)$ for the associated differential form.
\item\label{item:basic1} Let $Y\subset M'$ be a compact smooth oriented submanifold of codimension $m$. Then  $Y$  intersects $\pi\inv\phi_p(X)$  transversely for a.e. $p\in P$. 
If  $\beta \in \Omega^{N-m-k}(M)$, then
\begin{equation}\label{eq:intersect integrate 3}
\int_{Y}\pi^* \omega \wedge \beta  = \int_P \left(\int_{Y\bullet(\pi\inv\phi_p(X) ) } \beta \right)\, dp.
\end{equation}
In particular, taking $M'=M$ and $\pi$ to be the identity map,
\begin{equation}\label{eq:intersect integrate}
\int_Y \omega = \int_P \#( Y\bullet\phi_p(X) ) \, dp
\end{equation}
if $\dim Y =N-m= k$. More generally,
\begin{equation}\label{eq:intersect integrate 2}
\int_Y \omega \wedge\beta  = \int_P \left(\int_{Y\bullet \phi_p(X)  } \beta \right)\, dp.
\end{equation}

%\item\label{item:basic3} 
%% Then for $Y'\subset M'$ compact, smooth and oriented, the intersection of $\pi\inv \phi_p(X)$ and $Y'$ is transverse for almost every $p \in P$, with
%%$$
%%\int_{Y'} \pi^*\omega = \int_P (\pi\inv\phi_p(X)) \bullet Y' \, dp
%%$$
%%where the integral on the right is absolutely convergent. More generally, 
%If $Z'\subset M'$ is a smooth oriented manifold of dimension $ m \ge  k$, and $\beta \in \Omega^{m-k}(M)$, then
\end{enumerate}
\end{lemma}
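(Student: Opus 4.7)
The plan is to realize the current of part \ref{item:basic2} as a pushforward along the incidence map $\Phi\colon P\times X\to M$, $\Phi(p,x):=\phi_p(x)$, and then to deduce part \ref{item:basic1} by base change of this pushforward along $\pi$. First I would check that $\Phi$ is a submersion: the partial derivative $D_p\Phi$ is already surjective onto $T_{\phi_p(x)}M$ because $\phi^x$ is a submersion by hypothesis. The properness of $(p,x)\mapsto(\phi_p(x),x)$ on $\supp dp\times M$, together with $X$ being properly embedded, makes $\Phi$ proper on $\supp dp\times X$. By Fubini and the change of variables $\int_{\phi_p(X)}\beta=\int_X\phi_p^*\beta$, the functional in part \ref{item:basic2} rewrites as $\beta\mapsto \int_{P\times X} dp\wedge\Phi^*\beta$, exhibiting it as the $\Phi_*$-pushforward of the smooth current on $P\times X$ associated to $dp$. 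The defining relation \eqref{eq:def fiber int} then identifies the associated differential form on $M$ as $\omega=\pm\Phi_*(dp)\in\Omega^k(M)$.

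For the transversality assertion in part \ref{item:basic1}, I would introduce the auxiliary map $F\colon P\times Y\to M$, $F(p,y):=\phi_p\inv(\pi(y))$. Implicit differentiation of $\phi_p(x)=m$ in $p$ with $m$ fixed shows that $D_pF$ is surjective (again by the submersion hypothesis on $\phi^x$), so $F$ is a submersion and $F\inv(X)\subset P\times Y$ is a smooth submanifold whose fiber above $p$ is $Y\cap\pi\inv\phi_p(X)$. The projection $F\inv(X)\to P$ is proper over $\supp dp$ by compactness of $Y$ and the standing properness hypothesis, so Sard's theorem shows that for a.e.\ $p$ the fiber is the transverse intersection $Y\bullet\pi\inv\phi_p(X)$.

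To derive \eqref{eq:intersect integrate 3}, I would form the fibered product $Q:=(P\times X)\times_M M'$ along $\Phi$ and $\pi$, a smooth manifold since both maps are submersions. Forgetting the (now determined) coordinate $x=\phi_p\inv\pi(m')$ identifies $Q$ with $\{(p,m')\colon m'\in\pi\inv\phi_p(X)\}$; let $\tilde\Phi\colon Q\to M'$ and $\tilde\pi\colon Q\to P\times X$ denote the projections. Base change for fiber integration along submersions, immediate from \eqref{eq:def fiber int}, gives $\pi^*\Phi_*(dp)=\pm\tilde\Phi_*(\tilde\pi^*dp)$. Combining this with the projection formula $\tilde\Phi_*(\alpha\wedge\tilde\Phi^*\beta)=\tilde\Phi_*(\alpha)\wedge\beta$ and then with \eqref{eq:fiber int over z}, I obtain
$$\int_Y \pi^*\omega\wedge\beta \;=\; \int_{\tilde\Phi\inv(Y)}\tilde\pi^*dp\wedge\tilde\Phi^*\beta.$$
Under the identification above, $\tilde\Phi\inv(Y)$ is $F\inv(X)$, the restriction of $\tilde\Phi$ becomes the projection onto $Y$, and $\tilde\pi^*dp$ is the pullback of $dp$ from $P$. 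Slicing (Lemma \ref{lem:slicing}) applied to $F\inv(X)\to P$ with test form $dp$ then converts the right-hand side into that of \eqref{eq:intersect integrate 3}; the specializations \eqref{eq:intersect integrate} and \eqref{eq:intersect integrate 2} follow immediately.

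The main technical obstacle I anticipate is orientation bookkeeping: the product-orientation and preimage conventions of Section \ref{subsect:preimages}, together with the base change and projection formulas, each contribute potential signs which must be tracked throughout and shown to cancel so that the final identity holds with the sign stated. A secondary concern is ensuring at each step that properness is available where needed to justify the fiber integrations; this traces back in each case to the standing properness hypothesis together with compactness of $Y$ (or of the relevant support).
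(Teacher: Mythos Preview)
Your approach is sound and reaches the same conclusion as the paper, but by a genuinely different route. For part~\eqref{item:basic2}, the paper proves a more general smoothing principle: for \emph{any} current $T$ the average $\int_P \phi_{p*}T\,dp$ is smooth, by localizing to $\Rn$ and invoking the fact that pairing a distribution against a smooth kernel whose support projects properly yields a smooth function. You instead exploit the specific structure of $T=\cur{X}$: since $\Phi$ is a submersion, fiber integration along $\Phi$ sends smooth forms to smooth forms directly, giving $\omega=\pm\Phi_*(dp)$ with no distribution theory needed.

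For part~\eqref{item:basic1}, the paper approximates $Y$ weakly by smooth forms $\psi_i$, rewrites $\int_{M'}\psi_i\wedge\pi^*\omega\wedge\beta$ via the definition of $\omega$ and of fiber integration, introduces the incidence manifold $L=\{(x,y,p):\pi(y)=\phi_p(x)\}$ (which is your $Q$ with coordinates reordered), and passes to the limit using Lemma~\ref{lem:slicing}. You bypass the approximation step entirely by invoking base change $\pi^*\Phi_*=\pm\tilde\Phi_*\tilde\pi^*$ and the projection formula, then applying \eqref{eq:fiber int over z} to pull the integral over $Y$ back to $\tilde\Phi^{-1}(Y)\simeq F^{-1}(X)$ before slicing. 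Your argument is cleaner and more categorical; the paper's is more elementary (using only weak convergence and the defining relation \eqref{eq:def fiber int}) and makes the signs fully explicit---indeed the bulk of the paper's proof is a chain of eight displayed equalities tracking the factors $(-1)^{mk}$ and $(-1)^{m\dim P}$ through commutations of wedge products and intersections, which is precisely the bookkeeping you flag as your main remaining obstacle.

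One caution: your assertion that the stated properness hypothesis ``makes $\Phi$ proper on $\supp dp\times X$'' does not follow when $X$ is non-compact (take $M=P=\R$, $\phi_p(x)=x+p$, $dp$ Lebesgue, $X=M$; then $\Phi^{-1}(0)$ is the antidiagonal). The paper's own argument has the analogous implicit gap. In every application within the paper the role of $X$ is played by a compact conormal cycle, so the issue is moot there, but you should either add compactness of $X$ as a hypothesis or supply a separate argument.
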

\begin{proof}

\eqref{item:basic2}: This is a special case of the following more general fact: if $T$ is a current of dimension $n-k$ on $M$ then $\bar T:= \int_P \phi_{p*} T \, dp$, given by
$$
\langle \bar T,\beta\rangle := \int_P \langle  T ,\phi_{p}^*\beta\rangle \, dp,
$$
is smooth. To prove this, note first that using a partition of unity we may assume that $M= \Rn$, and recall the fundamental fact that if $K\in C^\infty(\Rn \times \Rn)$, such that the restriction to $\supp K$ of the projection to the first factor is proper, then for any distribution $h$ on $\Rn$ the function
$$
x \mapsto  \langle h (y), K(x,y)\rangle 
$$
is smooth. Thinking of $T$ as a differential form with distributional coefficients $h$, \eqref{item:basic2} follows directly, since the coefficients of $\bar T $ may be expressed as a sum of functions of this type corresponding to kernels $K$ obtained via the coarea formula from $dp$ and the derivatives of $\phi$.

\eqref{item:basic1}: The first assertion follows at once from Theorem 6.35 of \cite{lee}.

Let $\psi_1,\psi_2,\dots \in \Omega^{m}(M')$ be smooth differential forms whose associated currents converge weakly to $Y$. The left hand side of \eqref{eq:intersect integrate 3} is then the limit of the expressions

\begin{align*}
\int_{M'} \psi_i \wedge \pi^*\omega \wedge \beta &= (-1)^{mk} \int_{M'} \pi^*\omega \wedge \psi_i\wedge \beta\\
&=  (-1)^{mk} \int_{M}\omega \wedge \pi_*( \psi_i\wedge \beta)\\
&= (-1)^{mk} \int_P \int_{\phi_p(X)} \pi_*(\psi_i \wedge \beta) \, dp \\
&= (-1)^{mk} \int_P \int_{\pi\inv\phi_p(X)} \psi_i \wedge \beta \, dp
\end{align*}
The submersivity condition on $\phi$ implies that
\begin{equation*}
L:= \{(x,y,p) \in X \times M'\times P: \pi(y) = \phi_p(x)\}.
\end{equation*}
is a smooth manifold, and that the projections $\mu:L\to M',  \lambda:L \to P$ are smooth submersions. Clearly each $\lambda\inv(p )\simeq \pi\inv \phi_p(X)$; we orient $L$ so that the orientations agree with that induced by that of $X$. On the other hand, by Lemma \ref{lem:slicing} and \eqref{eq:commute intersect},
\begin{align*}
\int_P \int_{\pi\inv\phi_p(X)} \psi_i \wedge \beta \, dp&= \int_P \int_{\lambda \inv(p)} \mu^*(\psi_i \wedge \beta) \, dp\ \\
&= \int_L  \lambda^* dp\wedge \mu^*\psi_i \wedge \mu^* \beta   \\
&=(-1)^{m\dim P} \int_L \mu^*\psi_i\wedge  \lambda^* dp \wedge \mu^* \beta   \\
&\to (-1)^{m\dim P} \int_{\mu\inv Y}  \lambda^* dp \wedge \mu^* \beta   \\
&= (-1)^{m\dim P}\int_P\int_{\mu\inv Y \bullet \lambda\inv(p)} \mu^*\beta \\
&= \int_P\int_{ \lambda\inv(p) \bullet \mu\inv Y} \mu^*\beta \\
&= \int_P\int_{ \pi\inv\phi_p(X) \bullet  Y} \beta\\
&= (-1)^{mk}\int_P\int_{  Y\bullet  \pi\inv\phi_p(X)} \beta
\end{align*}
as $i \to \infty$.
\end{proof}

\section{Smooth polyhedra and conormal cycles} 
\subsection{The cotangent and cosphere bundles}\label{sect:cotan} 
We denote by $S^*M$ the cosphere bundle of $M$, which we may regard either as the space of oriented tangent hyperplanes to $M$ or else as the quotient of the deleted cotangent bundle $T^*M-\z$ under the equivalence relation $\bar \xi \sim t\bar \xi$ for $t >0$. Given $0\ne\bar \xi \in T^*M$ we denote its image in $S^*M$ by $[\bar \xi]$ or $\proj(\bar \xi)$. The projections of $T^*M$ and $S^*M$ to $M$ will be denoted by $\pi$. For convenience (only) we will sometimes impose an arbitrarily chosen Riemannian metric on $M$, in which case $S^*M$ may be identified with the tangent sphere or cotangent sphere bundle of $M$.

\subsubsection {}\label{sect:norient}The cotangent bundle $T^*M$ is canonically oriented. On the other hand there are several distinct natural ways to orient $S^*M$. We will fix the  orientation that agrees with the orientations coming from the local product structure $\Rn \times S^{n-1}$. Note that this  is $(-1)^n$ times the orientation as the 
 boundary of the submanifold of $T^*M$ consisting of all covectors of  length $\le 1$ with respect to some Riemannian metric (recall that this orientation is determined by the condition that the sign of an ordered basis $v_1,\dots,v_{2n-1}$ for $T_{\xi}S^*M$ agrees with that of the ordered basis $\frac{\partial}{\partial r},v_1,\dots,v_{2n-1}$ for $T_\xi T^*M$, where $\frac{\partial}{\partial r}$ is the Euler vector field). The fibers of the projectivization map $\proj:T^*M-\z \to S^*M$ may be identified with $(0,\infty)$ via the associated length function. Under the orientation convention of Section \ref{sect:general}, the orientations of these fibers are $(-1)^{n-1}$ times the canonical orientation of $(0,\infty)$.

\subsubsection{} A diffeomorphism $\varphi:M \to M$ induces a canonical symplectomorphism $(\varphi\inv)^*:T^*M \to T^*M$, homogeneous on fibers, via pullback, and thereby a contactomorphism $\tilde \varphi:S^*M \to S^*M$ that intertwines $\varphi $ and the projection $\pi:S^*M \to M$.

\subsubsection{} If $\xi,\eta \in S_xM$ and $\xi \ne -\eta$ then there is a well-defined {\bf segment} $\overline{\xi,\eta}\subset S_xM $ joining $\xi $ to $\eta$, defined as the set of all $[\cos t \,\bar \xi + \sin t\, \bar \eta], t \in (0,\frac \pi 2)$, for some arbitrarily chosen representatives $\bar \xi,\bar \eta$ with $[\bar \xi]= \xi, [\bar\eta]= \eta$. Under the identifications induced by a Riemannian metric as above, we may think of $\overline{\xi,\eta}$ as the minimizing geodesic from $\xi$ to $\eta$ in the (co)tangent sphere.

\subsubsection{} For convenience we choose a contact form $\alpha \in \Omega^1(S^*M)$: selecting a smooth global section $\sigma$ of the ray bundle $T^*M-\z \to S^*M$ we put for $\xi \in S^*M$
$$
\langle\alpha_{\xi}, \tau\rangle = \langle \sigma(\xi),\pi_* \tau\rangle.
$$
The choice of $\alpha$ determines a unique {\bf Reeb vector field} $T$ on $S^*M$ via the conditions
$$
\langle \alpha, T\rangle = 1, \quad \mathcal L_T\alpha = 0,\quad T \without \,d\alpha = 0.
$$
where $\mathcal L$ denotes the Lie derivative.
If $\beta \in \Omega^*(S^*M)$ then
\begin{equation}\label{eq:reeb}
\beta = \alpha \wedge(T \without \, \beta ) + T \without \, (\alpha \wedge \beta).
\end{equation}

It follows that $\beta$ is a multiple of $\alpha$ iff $\alpha \wedge \beta = 0$. Such a form $\beta$ is sometimes said to be {\it vertical} with respect to the contact structure. Observe that if $\omega \in \Omega^n(M)$ then $\pi^* \omega$ is vertical in this sense.

\subsection{Smooth polyhedra and conormal cycles}\label{sect:normal cycles}
\begin{definition} A {\bf smooth polyhedron} in $M$ is a properly embedded smooth submanifold with corners, i.e. a closed subset $A\subset M$ such that  each $x_0 \in A$ admits a neighborhood $U \subset M$ and smooth functions 
$$f_1,\dots,f_l, g_{l+1}, \dots,g_k \in C^\infty(U), \quad 0\le l\le k \le n,$$ 
such that
\begin{itemize}
\item $f_1(x_0) = \dots = g_k(x_0) = 0$
\item $d_{x_0}f_1,\dots, d_{x_0} g_k$ are linearly independent
\item $A \cap U = \bigcap_{i=1}^l f_i\inv[0,\infty) \cap \bigcap_{j=l+1}^k g_j\inv(0)$
\end{itemize}

Denote by $A_{n-k}$ the set of points $x_0$ with these properties.

 We denote the class of all such objects  by $\pol=\pol(M)$, and the subclass of compact smooth polyhedra by $\pol_c=\pol_c(M)$. 
\end{definition}

Equivalently, $A$ is a smooth polyhedron if for each $x_0$ there is such $U$ such that the pair $(U,U\cap A)$ is diffeomorphic to $(\Rn, [0,\infty)^l \times \R^{n-k+l} \times \{0\})$, i.e. to an orthant, possibly with positive codimension, within $\Rn$.
Clearly any $A\in \pol$ admits a natural {\bf stratification} $A= \bigsqcup_{l=0}^n A_l$, with the $A_l$ defined as above. Observe that $A_n$ is the  interior of $A$,  and that  each $A_l$ is a smooth locally closed submanifold of $M$ of dimension $l$.

Any $A \in \pol$ is {\bf semiconvex}, i.e. its image in $\Rn$ under any local coordinate chart has locally positive reach in the sense of \cite{cm}. The following (slightly adapted) notions from \cite {cm} therefore apply, but in simplified form due to the additional smoothness of the sets we are considering.
Given $x \in A$, the {\bf tangent cone} to $A$ at $x$ is the closed convex cone $\Tan_xA$ consisting of all $\gamma'(0)$ where $\gamma:[0,\infty)\to A$ is a smooth curve with $\gamma(0) = x$. The {\bf conormal cone} to $A$ at $x$ is the convex cone $\Nor_xA$ dual to $\Tan_xA$, i.e. 
\begin{align*}
\Nor_x(A)&:= \{\bar \xi \in T_x^*M: \langle \bar \xi, v\rangle \le 0 \text{ for all } v \in \Tan_xA\}, \\
\intertext{and put }
\Nor(A) &:= \bigcup_x \Nor_x(A) \subset T^*M
\end{align*}
Put also
\begin{align*}
N_x(A)&:= \{[\bar \xi]: 0\ne \bar \xi \in \Nor_x(A)\} \subset S^*M\\
N(A) &:= \bigcup_x N_x(A) \subset S^*M
\end{align*}
Thus $N_x(A) = \emptyset$ when $x $ lies in the interior of $A$. It is clear that if $\varphi:M\to M$ is a diffeomorphism then $(\varphi\inv)^*, \tilde \varphi$ map $\Nor(A),N(A)$ bijectively to $\Nor(\varphi(A)),N(\varphi(A))$ respectively.

The set $\Nor(A)$ is a  piecewise smooth  conic Lagrangian submanifold of $T^*M$, 
comprised of smooth submanifolds with corners, with pairwise disjoint interiors, of the conormal bundles $\Nor(A_k)$ of the strata $A_k$ of $A$. This is clear if $A$ is an orthant in $M=\Rn$, hence true for general smooth polyhedra $A$ by diffeomorphism invariance. A point $\bar \xi \in \Nor(A)$ belonging to one of these interiors is called a {\bf smooth point} of $\Nor(A)$.
Selecting an auxiliary Riemannian metric, the exponential map induces a piecewise smooth homeomorphism of the submanifold of all elements of $\Nor(A)$  of sufficiently small length to a tubular neighborhood of $A$ in $M$. We endow $\Nor(A)$ with the orientation thus induced by that of $M$. 
 
Likewise $N(A)$ is a compact piecewise smooth {\bf Legendrian} submanifold without boundary of $S^*M$ (i.e.
the contact form $\alpha$ vanishes identically on all tangent spaces of $N(A)$), and decomposes as a union of smooth submanifolds with corners of the $N(A_k)$ with disjoint interiors. We orient $N(A)$ via its identification with the boundary of the domain in  $\Nor(A)$ consisting of  of covectors of length $\le 1$  with respect to some Riemannian metric.
Thus, under the equivalence induced by the exponential map above, the conormal cycle $N(A)$ may be identified with the image of the boundary of the tubular neighborhood, oriented accordingly.
 We observe
\begin{lemma}\label{lem:oriented n}
The orientation of $N(A)$ agrees with the orientations of each $N(A_k)$, defined in the same manner, where they overlap. 
We have the equality of oriented submanifolds
\begin{equation}\label{eq:norient}
\Nor(A)-\z = \proj \inv N(A).
\end{equation}
\end{lemma}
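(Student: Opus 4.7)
The set-theoretic equality $\Nor(A)-\z = \proj\inv N(A)$ is immediate from the positive homogeneity of $\Nor(A)$, so the content of the lemma consists of the two orientation assertions. I would prove both by explicit computation at a single smooth point, invoking the naturality under diffeomorphisms of all the relevant constructions ($\Tan,\Nor,N$, the exponential map, and the tube identification) to reduce to a coordinate model. Fix a smooth point $\bar\xi_0 \in \Nor(A)\setminus\z$ lying over a stratum $A_k$; by a suitable choice of local coordinates around $\pi(\bar\xi_0)$ I may assume $M=\R^n$ with its standard orientation, $A = \{x_1\ge 0,\dots,x_l\ge 0,\ x_{l+1}=\dots=x_{l+c}=0\}$, the stratum $A_k = \{x_1=\dots=x_{l+c}=0\}$ is the coordinate subspace through the origin (with $k = n-l-c$), and $\bar\xi_0$ has unit length and lies in the relative interior of $\Nor_0 A = \{\bar\xi:\xi_1\le 0,\dots,\xi_l\le 0,\ \xi_{l+c+1}=\dots=\xi_n=0\}$.

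In this model, a neighborhood of $\bar\xi_0$ in $\Nor(A)\setminus\z$ is parametrized by $(x_{l+c+1},\dots,x_n,\xi_1,\dots,\xi_{l+c})$, and the Euclidean exponential map $(x,\bar\xi)\mapsto x+\bar\xi^\sharp$ carries this neighborhood diffeomorphically onto an open subset of a tube of $A$ in $\R^n$. Pulling back the standard volume form of $\R^n$ along this map yields the orientation of $\Nor(A)$ up to an explicit sign. I would then compute the orientation of $\proj\inv N(A)$ by combining the boundary orientation of $\{|\bar\xi|\le 1\}\cap\Nor(A)$ (the defining convention for $N(A)$, with the radial direction as outward normal) with the $(-1)^{n-1}$ fiber orientation of $\proj$ prescribed by Section~\ref{sect:norient}; a direct wedge-product calculation shows the two orientations coincide, proving the second assertion. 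For the first assertion, I observe that because $\bar\xi_0$ lies in the relative interior of $\Nor_0 A$, a neighborhood of $\bar\xi_0$ in $\Nor(A)$ coincides locally with a neighborhood in the full conormal bundle $\Nor(A_k)$, and the exponential maps of both carry this neighborhood to the same open subset of $M$; thus the tube-induced orientations of $\Nor(A)$ and $\Nor(A_k)$ agree at $\bar\xi_0$, hence so do the boundary orientations of $N(A)$ and $N(A_k)$ within their respective unit ball bundles.

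The principal obstacle is simply the bookkeeping of the four intertwined sign conventions at play: the orientation of $S^*M$ (which is $(-1)^n$ times the boundary-of-unit-disk-bundle convention of $T^*M$), the $(-1)^{n-1}$ fiber orientation of $\proj$, the boundary orientation of $N(A)\subset\Nor(A)$, and the tube-induced orientation of $\Nor(A)$. These signs must cancel exactly for the claimed oriented equality to hold, as they do by design in Section~\ref{sect:norient}; verifying the cancellation is what the wedge-product computation above accomplishes.
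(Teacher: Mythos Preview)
Your proposal is correct and follows essentially the same approach as the paper, though you work more explicitly in coordinates where the paper argues abstractly. For the first assertion the paper simply invokes the tubular-neighborhood identification (which is exactly your observation that $\Nor(A)$ and $\Nor(A_k)$ coincide near a smooth point and share the same exponential map); for the second assertion the paper avoids coordinates by noting directly that $(r,\xi)\mapsto r\xi$ is an orientation-preserving diffeomorphism $(0,\infty)\times N(A)\to \Nor(A)-\z$, then swapping factors to pick up $(-1)^{n-1}$ and invoking the fiber-orientation convention of Section~\ref{sect:norient}---this is the same sign bookkeeping you carry out via wedge products, just packaged without coordinates.
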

\begin{proof} The first assertion is immediate from the identification above of the normal cycle with the boundary of a tubular neighborhood.

To prove the second assertion, identifying $S^*M$ with a submanifold of $T^*M$ with the aid of a Riemannian metric, the diffeomorphism $(r,\xi)\mapsto r\xi$ is an orientation-preserving diffeomorphism $(0,\infty) \times N(A) \to\Nor(A)-\z$, where $(0,\infty)$ is oriented in the standard way. Thus the map $(\xi,r) \mapsto r\xi$ maps $ N(A)\times (0,\infty)\to\Nor(A)-\z$ with orientation $(-1)^{n-1}$. By \ref{subsect:preimages} and \ref{sect:norient}, this yields \eqref{eq:norient}.
\end{proof}

%Note that the respective restrictions of the maps $(\varphi\inv)^*, \tilde \varphi:\Nor(A),N(A)\to \Nor(\varphi(A)),N(\varphi(A))$ are orientation-preserving, regardless of whether $\varphi$ is or is not orientation-preserving in its own right.

\begin{definition} We say that $A,B \in \pol(M)$ {\bf intersect transversely} if the strata of $A,B$ intersect pairwise transversely.
\end{definition}

\begin{lemma}\label{lem:transverse intersect} If $A,B \in \pol(M)$ intersect transversely then $A\cap B \in \pol(M)$. In this case, for all $x \in A\cap B$
\begin{equation}\label{eq:sum of Nor}
\Nor_x(A\cap B) = \Nor_x(A) + \Nor_x(B)
\end{equation}
(Minkowski sum)
and $N_x(A\cap B)$ is the union of all segments $\overline{\xi,\eta}$ for $\xi \in N_x(A), \eta\in N_x(B)$. The interiors of these segments are pairwise disjoint.
\end{lemma}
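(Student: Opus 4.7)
My plan is to reduce the entire lemma to a local normal form near each point $x\in A\cap B$. Let $A_\ell$ and $B_{\ell'}$ be the strata of $A$ and $B$ containing $x$. I will choose defining functions $f_1,\dots,f_a$, $g_1,\dots,g_b$ for $A$ near $x$ (with every $f_i$ vanishing at $x$, so $a+b=n-\ell$) and similarly $f'_1,\dots,f'_{a'},g'_1,\dots,g'_{b'}$ for $B$. Since $T_xA_\ell$ is the joint kernel of $df_i,dg_j$ and $T_xB_{\ell'}$ the joint kernel of $df'_i,dg'_j$, the transversality hypothesis $T_xA_\ell+T_xB_{\ell'}=T_xM$ is equivalent to linear independence at $x$ of the combined collection $\{df_i,dg_j,df'_i,dg'_j\}$. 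Completing these functions to a local chart $(y_1,\dots,y_n)$ yields, locally,
\[
A=\{y_1,\dots,y_a\ge 0,\ y_{a+1}=\cdots=y_{a+b}=0\},\quad B=\{y_{a+b+1},\dots,y_{a+b+a'}\ge 0,\ y_{a+b+a'+1}=\cdots=y_{a+b+a'+b'}=0\},
\]
each being an orthant (possibly of positive codimension) supported on a disjoint coordinate block. This immediately exhibits $A\cap B$ near $x$ as the combined orthant, hence as a smooth polyhedron.

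From the normal form, $\Tan_xA$ and $\Tan_xB$ are orthants in disjoint blocks, so $\Tan_x(A\cap B)=\Tan_xA\cap\Tan_xB$ by inspection. Passing to polars, $\Nor_xA$ and $\Nor_xB$ are orthants in the disjoint dual blocks, and the Minkowski identity $\Nor_x(A\cap B)=\Nor_xA+\Nor_xB$ is manifest. Uniqueness of the decomposition $\bar\zeta=\bar\xi+\bar\eta$ reduces to showing $\operatorname{span}(\Nor_xA)\cap\operatorname{span}(\Nor_xB)=\{0\}$; since these spans are respectively $(T_xA_\ell)^\perp$ and $(T_xB_{\ell'})^\perp$, their intersection is $(T_xA_\ell+T_xB_{\ell'})^\perp=\{0\}$, again by transversality.

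For the segment description I take a nonzero $\bar\zeta\in\Nor_x(A\cap B)$ and decompose it uniquely. If both summands are nonzero, setting $r=\sqrt{|\bar\xi|^2+|\bar\eta|^2}$ and $\cos t=|\bar\xi|/r,\ \sin t=|\bar\eta|/r$ yields $[\bar\zeta]=[\cos t\,(\bar\xi/|\bar\xi|)+\sin t\,(\bar\eta/|\bar\eta|)]$, which lies on the interior of the segment joining $[\bar\xi]\in N_xA$ to $[\bar\eta]\in N_xB$; antipodality of $[\bar\xi]$ and $[\bar\eta]$ is ruled out because $\bar\xi=-c\bar\eta$ with $c>0$ would force $\bar\xi\in\operatorname{span}(\Nor_xA)\cap\operatorname{span}(\Nor_xB)=\{0\}$. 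The two degenerate cases where one summand vanishes account for $N_xA\cup N_xB$. Conversely, every interior point of such a segment arises this way, and since projectivization only discards a common positive scalar, the uniqueness of $(\bar\xi,\bar\eta)$ forces the interiors of distinct segments to be pairwise disjoint.

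The principal obstacle I expect is the coordinate normalization in the first step: while the underlying idea is clear, one must carefully keep track of both inequality and equality defining functions for each polyhedron and verify that transversality of strata corresponds exactly to linear independence of the combined active differentials. Once this normal form is established, the Minkowski sum formula and the segment decomposition reduce to elementary polar-cone calculations which are essentially coordinate inspection.
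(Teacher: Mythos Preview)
Your argument is correct and essentially self-contained. The paper's own proof is much terser: it declares the first statement ``clear from the definition,'' invokes Federer's \emph{Curvature measures} (Theorem 4.10(3)) for the Minkowski sum identity \eqref{eq:sum of Nor}, says the segment description ``translates directly'' from that, and deduces disjointness of segment interiors from the single observation that transversality forces $\Nor(A)\cap\Nor(B)\subset\z$.

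The substantive difference is that you replace the citation to Federer with an explicit local normal form: using transversality of the strata through $x$ to see that the combined active differentials are linearly independent, then completing to a chart in which $A$ and $B$ become orthants supported on disjoint coordinate blocks. From there every claim (tangent cones intersect, normal cones sum, spans of the normal cones meet only at $0$, segments are well-defined and disjoint) becomes a direct coordinate inspection. This buys you a fully elementary proof that does not leave the $C^\infty$ setting of the paper, at the cost of some bookkeeping. The paper's route is shorter but outsources the key identity to the positive-reach literature; yours makes transparent exactly where transversality enters at each step (linear independence of differentials, and hence $\operatorname{span}(\Nor_xA)\cap\operatorname{span}(\Nor_xB)=\{0\}$), which is arguably more in keeping with the paper's stated pedagogical aim. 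Your remark that the degenerate summand cases recover $N_xA\cup N_xB$ is also a useful clarification, since the paper's segments $\overline{\xi,\eta}$ are open.
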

\begin{proof} The first statement is clear from the definition of smooth polyhedra.

The relation \eqref{eq:sum of Nor} is a straightforward adaptation of \cite{cm}, Theorem 4.10 (3). This statement translates directly to give the second assertion. Since by transversality $\Nor(A) \cap \Nor(B) \subset \z$, the final assertion is clear.
\end{proof}

\subsection{Morse theory via intersections} 
Let $f$ be a Morse function on $M$, i.e. $f \in C^\infty(M)$ and the graph of  $df$ meets the zero section transversely in $T^*M$. Let $A \in \pol_c(M)$. 
Put 
$$
s:S^*M\to S^*M, \quad s(\xi) = -\xi
$$
for the fiberwise antipodal map. 
We will say that $f$ is {\bf Morse on $A$} if the graph $\Gamma \subset T^*M$ of $df$ intersects $s(\Nor(A))$ only in smooth points, and every such intersection is transverse. This implies in particular that  no critical points of $f$ lie on the boundary of $A$. We orient $\Gamma$ so that its projection to $M$ is orientation-preserving. 

It is elementary (cf. e.g. \cite{guil-pol}) that the ordinary Morse condition on $f$ is equivalent to the transversality of $\Gamma \subset T^*M$ and the zero section, and that the multiplicity of the intersection at a point $x \in M$ is $(-1)^{\sigma(f,x)}$ where $\sigma(f,x)$ is the Morse index of $f$ at $x$. Morse theory thus implies that if $f$ is proper, bounded below, and admits only finitely many critical points then
$$
\z\bullet \Gamma = \chi(M).
$$
The next lemma adapts this identity to a function that is Morse on  $A \in \pol_c(M)$.

\begin{lemma}\label{lem:morse1} If $f$ is Morse on $A$ then
$$
\chi(A) =\#( s(\Nor(A))\bullet \Gamma ).
$$
More generally, if $\Gamma \cap s(\Nor(A) ) \cap \pi \inv f\inv(t)=\emptyset$ (i.e. $t$ is a regular value of $\restrict f A$ ) then
\begin{align*}
\chi(A \cap f \inv(-\infty, t]) &= \#(s(\Nor(A)) \bullet (\Gamma\cap \pi\inv f\inv(-\infty,t]) )\\
&= (-1)^n \#( (\Gamma\cap \pi\inv f\inv(-\infty,t]))\bullet s(\Nor(A)).
\end{align*}
\end{lemma}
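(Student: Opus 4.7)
The plan is to prove the more general second statement; the first follows by taking $t > \max_A f$, and the factor $(-1)^n$ in the third expression is the commutation rule \eqref{eq:commute intersect} applied to two $n$-dimensional submanifolds of the $2n$-dimensional $T^*M$. I regard both sides of the main equation as step functions of $t$ on the set of regular values of $\restrict f A$, both vanishing for $t < \min_A f$; it then suffices to match their jumps at each critical value.

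Under the Morse-on-$A$ hypothesis, $\Gamma \cap s(\Nor(A))$ is a finite set of transverse intersections at smooth points. At such a point $(x, df_x)$ the base $x$ lies in the relative interior of some stratum $A_k$ and $-df_x$ lies in the relative interior of $\Nor_x(A)$, which is contained in the annihilator of $T_xA_k$. Consequently $df_x$ vanishes on $T_xA_k$, making $x$ a critical point of the smooth function $\restrict f {A_k}$; the transversality of $\Gamma$ and $s(\Nor(A))$ in $T^*M$ then forces the Hessian of $\restrict f {A_k}$ at $x$ to be nondegenerate, and we let $\sigma = \sigma_k(f,x)$ denote its Morse index. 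The critical values of $\restrict f A$ are thus exactly the values $f(x)$ at these finitely many intersection points, and the lemma reduces to verifying, at each such point, the local identity
\[
\textrm{signed intersection at } (x, df_x) \;=\; (-1)^\sigma \;=\; \textrm{jump of } \chi(A \cap \{f \le t\}) \text{ at } t = f(x).
\]

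Both sides of this local identity are checked in a chart identifying $(M, A, x)$ with $(\Rn, \Rk \times [0,\infty)^{n-k}, 0)$. The Morse lemma along the stratum together with the strict outward positivity $-df_x \in \Nor_x(A)^\circ$ brings $f$ to the form $f(0) + \tfrac12 Q(y) + L(z) + O(\|\cdot\|^3)$ with $Q$ nondegenerate of index $\sigma$ on $\Rk$ and $L$ strictly positive on the interior of $[0,\infty)^{n-k}$. The topological jump is then a standard handle-attachment computation: passing through $f(0)$ attaches a $\sigma$-cell along the stable cone of $\restrict Q {A_k}$, producing a $(-1)^\sigma$ change in $\chi$. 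The intersection sign comes from a $2n\times 2n$ determinant in the dual coordinates $(y, z, \xi, \eta)$ on $T^*M$, using the Gauss-map basis for $T\Gamma$ and the obvious basis for $T s(\Nor(A))$ along the conormal bundle of $A_k$, with an overall sign $(-1)^{n-k}$ introduced by the antipode $s$ relative to the tubular-neighborhood orientation of $\Nor(A)$ from Lemma~\ref{lem:oriented n}. After block elimination all auxiliary Hessian entries cancel, leaving $\sgn \det(\partial^2 f / \partial y^2)_x = (-1)^\sigma$.

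The principal obstacle is the orientation bookkeeping in the determinant step, where one must simultaneously reconcile the canonical orientation of $T^*M$, the orientation of $\Gamma$ inherited from $M$, the orientation of $\Nor(A)$ coming from the exponential identification with the tubular neighborhood (Lemma~\ref{lem:oriented n}), the factor $(-1)^{n-k}$ introduced by the fiberwise antipode $s$, and the intersection-multiplicity convention of Section~\ref{sect:orient intersect}. Once these signs are correctly combined, the reduction of the determinant to the signature of the tangential Hessian---and hence the match with the Morse-theoretic $(-1)^\sigma$---is routine.
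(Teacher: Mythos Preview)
The paper does not actually prove this lemma: its entire proof is a citation to the elementary approach of \cite{fucm} and the stratified theory of \cite{gor-macp}. Your proposal is a sketch of precisely the former argument---treat both sides as step functions of $t$, localize at each point of $\Gamma\cap s(\Nor(A))$, recognize it as a nondegenerate critical point of $\restrict f{A_k}$ on the ambient stratum, and match the intersection multiplicity with the $(-1)^\sigma$ jump in $\chi$ coming from a $\sigma$-handle. The outline is correct, and your derivation of the $(-1)^n$ from \eqref{eq:commute intersect} (codimensions $n$ and $n$ in $T^*M$) is right.

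Two small remarks. First, your local chart $(\Rn,\Rk\times[0,\infty)^{n-k})$ treats only the full-dimensional case; in general the model is $\Rk\times[0,\infty)^l\times\{0\}$ with an equality factor, and you should say one word about why the normal Morse datum still contributes trivially (the normal slice is a cone, so the retraction onto the tangential sublevel set goes through unchanged). Second, you correctly identify the orientation determinant as the crux and list all the ingredients---the tubular-neighborhood orientation of $\Nor(A)$, the $(-1)^{n-k}$ from $s$ on the $(n-k)$-dimensional conormal fiber, the canonical orientation of $T^*M$, and the convention of Section~\ref{sect:orient intersect}---but you stop short of actually carrying out the block reduction to $\sgn\det(\partial^2 f/\partial y^2)$. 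Since the paper itself defers this entire computation to the literature, your level of detail already exceeds what appears there; but if a self-contained argument is the goal, that determinant step needs to be written out.
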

\begin{proof} This may be proved using either the elementary approach of \cite{fucm} or the more sophisticated and general theory of \cite{gor-macp}.
\end{proof}

We express this fact in terms of the cosphere bundle $S^*M$.  Put $[\Gamma]\subset S^*M$ for the image of $\Gamma -\z$ under projectivization.
 
%  Since the latter is the choice that is compatible with the 
% 
% Alternatively, if $\frac{\partial}{\partial r},w_1,\dots,w_{2n-1}$ constitute a positively oriented basis of $T_{\bar \xi}(T^*M)$ at $\bar \xi \ne 0$, where the first element  again denotes the Euler field, then the images of 
%$w_1,\dots,w_{2n-1}$ under the differential of the projectivization map constitute a positively oriented basis of $T_{[\bar \xi]}(S^*M)$. Note that this orientation is $(-1)^n$ times the orientation of $S^*M$ inherited from  its local identifications with open subsets of $\Rn \times S^{n-1}$.
%
%The orientation of $S^*M$ has been chosen so that Lemma \ref{lem:morse1} translates directly into the following. 

\begin{lemma}\label{lem:morse2} If $f $ is Morse on $A\in \pol_c$ then the set of critical points of $\restrict f A$ is finite, and for every regular value $t$ of $\restrict fA$
\begin{align*}
\chi(A\cap f\inv(-\infty,t]) &=  (-1)^n \#(([\Gamma]\cap \pi\inv f\inv(-\infty,t])\bullet s(N(A)))\\
&\quad\quad  + \sum_{\substack {x \in A \\ d_xf=0\\f(x) \le t}} (-1)^{\sigma(f,x)}.
\end{align*}
\end{lemma}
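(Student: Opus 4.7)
The strategy is to take the conclusion of Lemma \ref{lem:morse1} and descend the intersection from $T^*M$ to $S^*M$, peeling off the zero-section contributions along the way. Since $\Nor_x(A) = \{0\}$ for $x$ in the interior $A_n$ and contains $0$ on the boundary strata, we have the decomposition $s(\Nor(A)) = \z|_A \,\cup\, (s(\Nor(A))-\z)$, with pairwise disjoint interiors. Writing $\Gamma_t := \Gamma \cap \pi\inv f\inv(-\infty,t]$, the transverse intersection $\Gamma_t \bullet s(\Nor(A))$ in $T^*M$ splits correspondingly into a zero-section part (consisting of interior critical points of $f$) and an off-zero-section part. Finiteness of the critical set follows from compactness of $A$ plus the transversality assumption, since a transverse intersection of compact pieces of submanifolds of complementary dimensions is a finite set of points.

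For the zero-section part, the Morse-on-$A$ hypothesis forbids critical points on $\partial A$, so every contributing critical point $x$ lies in the interior $A_n$, where $s(\Nor(A))$ locally agrees with $\z$. The classical Morse-theoretic fact $\#(\z\bullet\Gamma)_x = (-1)^{\sigma(f,x)}$ recalled in the text together with the commutation rule \eqref{eq:commute intersect} (both $\z$ and $\Gamma$ have codimension $n$, so the sign picked up is $(-1)^{n^2}=(-1)^n$) gives $\#(\Gamma\bullet s(\Nor(A)))_x = (-1)^n(-1)^{\sigma(f,x)}$ at each such point.

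For the off-zero-section part, Lemma \ref{lem:oriented n} (applied after conjugating by the diffeomorphism $s$) identifies $s(\Nor(A))-\z = \proj\inv(s(N(A)))$ as oriented submanifolds of $T^*M-\z$. Since $\Gamma$ is the graph of $df$, the map $\proj$ restricts to a diffeomorphism from $\Gamma-\z$ onto $[\Gamma]$, and transversality of $(\Gamma_t-\z)$ with $s(\Nor(A))-\z$ in $T^*M$ pushes forward, via the submersion $\proj$, to transversality of $[\Gamma_t]$ with $s(N(A))$ in $S^*M$. Lemma \ref{lem:preimages}\eqref{item:pushdown intersect} now yields
\begin{equation*}
\#\bigl((\Gamma_t-\z)\bullet \proj\inv(s(N(A)))\bigr) = \#\bigl([\Gamma_t]\bullet s(N(A))\bigr).
\end{equation*}

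Combining the two contributions and multiplying by the leading $(-1)^n$ from Lemma \ref{lem:morse1}, the factor $(-1)^n$ cancels in the critical-point sum but survives in the boundary term, yielding the stated formula. The main pitfall is the careful bookkeeping of signs: one must track the permutation sign $(-1)^n$ from \eqref{eq:commute intersect}, the fiber-orientation convention $(-1)^{n-1}$ built into Section \ref{sect:norient} (which is already absorbed into the statement \eqref{eq:norient} of Lemma \ref{lem:oriented n}), and the orientation of $[\Gamma]$, which must be taken so that $\proj|_{\Gamma-\z}$ is orientation-preserving — equivalently, so that $[\Gamma]$ inherits its orientation from $M$ via the canonical section structure. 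Verifying that all three conventions are mutually compatible is the one nontrivial piece of the argument; the rest is bookkeeping.
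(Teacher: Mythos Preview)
Your argument is correct and is precisely the approach the paper has in mind: its one-line proof cites exactly the two ingredients you invoke, namely \eqref{eq:intersect project} (Lemma \ref{lem:preimages}\eqref{item:pushdown intersect}) and Lemma \ref{lem:oriented n}. You have simply made explicit the decomposition into zero-section and off-zero-section contributions and tracked the signs, which the paper leaves to the reader.
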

\begin{proof} This follows at once from \eqref{eq:intersect project} and Lemma \ref{lem:oriented n}.
\end{proof}

Observe that the intersection product that occurs in Lemma \ref{lem:morse2} is commutative, since  the dimensions of the factors are $n-1$ and $n$ respectively (i.e. one of them is even).

\section{Smooth valuations}  Following \cite{ale06, ale-be09}, a {\bf smooth valuation} on $M$ is a set function $\mu:\pol_c(M)\to \R$ that is expressible as
\begin{equation}\label{eq:def valuation}
\mu(A) = \int_{N(A)} \beta + \int_A\gamma
\end{equation}
for some smooth differential forms $\beta \in \Omega^{n-1}(M), \gamma \in \Omega^n(S^*M)$.
We denote by $\V(M)$ the vector space of all smooth valuations on $M$. The classic works \cite{chern1, chern2} imply that the Euler characteristic $\chi \in \V(M)$.

A related notion is that of a (smooth) {\bf curvature measure} on $M$, defined as follows. Put $\m(M)$ for the space of all signed Radon measures on $M$. A curvature measure on $M$ is a set function $\Phi: \pol(M) \to \m(M)$ given by
\begin{equation}\label{eq:def cm}
\Phi(A,E) := \Phi(A)(E) := \int_{N(A)\cap \piinv E}\beta +  \int_{A\cap E}\gamma.
\end{equation}
Alternatively we may think of such $\Phi$ as a map  $C^\infty(M)\to\V(M)$, associating to a smooth function $f$ the smooth valuation
\begin{equation}
\Phi_f:A \mapsto \Phi(A,f):=  \int_{N(A)}\pi^* f \cdot\beta + \int_{A}f \cdot\gamma
\end{equation}

We denote by $\C(M)$ the vector space of all smooth curvature measures on $M$. Clearly there are surjective linear maps
\begin{equation}\label{eq:projections}
\Omega^{n-1}(S^*M)  \oplus \Omega^n(M)\to \C(M) \to \V(M).
\end{equation}
We put 
\begin{equation}
\meas{\beta,\gamma}\in \C(M), \quad \cur{\beta,\gamma} \in \V(M)
\end{equation}
for the respective images of $(\beta,\gamma)$ under the first map and the composition of the two maps.

A smooth valuation $\mu \in \V(M)$ is finitely additive in the sense that if $A,B,A\cap B, A\cup B \in \pol_c(M)$ then
$$
\mu(A\cup B) =\mu(A) +\mu(B) - \mu(A\cap B).
$$
This follows from the corresponding relation among the normal cycles:
$$
\int_{N(A\cup B)} =\int_{N(A) }+\int_{N(B)} - \int_{N(A\cap B)}.
$$
We will not make use of this relation except to argue that, since a smooth polyhedron may be decomposed as finely as desired, a smooth valuation is determined by its values on subsets of small open sets in $M$.

\subsection{The variation and the point function of a smooth valuation}\label{sect:variation}
There are canonical maps 
$$
\mathcal F:\V(M) \to C^\infty (M),\quad \Delta:\V(M) \to \Omega^{n}(S^*M)
$$
determined by the relations
\begin{align}
\label{eq:fmu 1}\mathcal F_\mu (x) &:= \mu(\{x\}),\\
\label{eq:fmu 2}\alpha \wedge \Delta_\mu
%= d\Delta_\mu
&=0,\\
\label{eq:fmu 3}\ddt \mu(F_t(A)) &= \int_{N(A)}   v\without  \Delta_\mu
\end{align}
for any $A \in \pol_c(M)$ and any smooth vector field $v$ on $M$, where $F_t$ is the flow of $v$. Note that we have abused notation in the integrand on the right hand side: formally, we should replace $v$ by a lift of $v$ to $S^*M$ and observe that since $\Delta_\mu$ is a  
%closed 
multiple of the contact form  and $N(A)$ is 
%closed and 
Legendrian, the value of the integral is independent of choices.

\begin{proposition}\label{prop:unique} \begin{enumerate}
\item\label{item:uniqueness} The maps $\mathcal F, \Delta$ are uniquely determined by the properties \eqref{eq:fmu 1}, \eqref{eq:fmu 2}, \eqref{eq:fmu 3}.
\item\label{item:kernel thm} If $\mathcal F_\mu = \Delta_\mu = 0$ then $\mu =0$. 
\item\label{item:ker2} If $\mu =\cur{\beta,\gamma}$, then 
\begin{align}
\mathcal F_\mu &= \pi_* \beta,\\
\label{eq:structure eqns} \Delta_\mu &= D\beta+\pi^*\gamma,
\end{align}
where $D$ denotes the Rumin differential \cite{rumin}.$ \quad \square$
\end{enumerate}
\end{proposition}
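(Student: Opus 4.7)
The plan is to establish (3) first by direct calculation, then to derive (1) and (2) from it together with standard infinitesimal and density arguments.

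For (3), apply \eqref{eq:def valuation} at $A=\{x\}$: the interior term vanishes by dimension, and since $N(\{x\})=\pi\inv(x)$ is the entire cosphere fiber, the boundary term equals $(\pi_*\beta)(x)$ by fiber integration, giving $\mathcal F_\mu=\pi_*\beta$. For $\Delta_\mu=D\beta+\pi^*\gamma$, verticality is automatic: $D\beta$ lies in the $\alpha$-ideal by construction of the Rumin differential, and $\pi^*\gamma$ is vertical as noted in \S3.1. The variation identity \eqref{eq:fmu 3} is the substantive computation. Since the contact lift $\tilde v$ of $v$ satisfies $N(F_tA)=\tilde F_tN(A)$, Cartan's formula combined with the cycle property of $N(A)$ yields $\ddtzero\int_{N(F_tA)}\beta=\int_{N(A)}\tilde v\without d\beta$, while Stokes on $A$ (using $d\gamma=0$) gives $\ddtzero\int_{F_tA}\gamma=\int_{\partial A}v\without\gamma$. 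To identify the sum with $\int_{N(A)}\tilde v\without(D\beta+\pi^*\gamma)$, two observations are used: first, $D\beta-d\beta=d(\alpha\wedge\psi)$ for Rumin's primitive $\psi$, and $\int_{N(A)}\tilde v\without d(\alpha\wedge\psi)$ decomposes by Cartan into $\ddtzero\int_{N(F_tA)}\alpha\wedge\psi$ (vanishing because $\alpha$ restricts to zero on every conormal cycle) plus an exact form (vanishing by Stokes on the cycle $N(A)$); second, $\tilde v\without\pi^*\gamma=\pi^*(v\without\gamma)$ (since $\alpha(\tilde v)$ depends only on the base component $v$), and $\pi_*N(A)=\partial A$ as an oriented $(n-1)$-chain, the corner strata contributing nothing because the fibers of $\restrict{\pi}{N(A)}$ there are positive-dimensional.

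For (1), uniqueness of $\mathcal F_\mu$ is immediate from \eqref{eq:fmu 1}. For $\Delta_\mu$, the task is to show that a vertical $n$-form $\omega$ with $\int_{N(A)}\tilde v\without\omega=0$ for every $A\in\pol_c$ and vector field $v$ must vanish. A partition-of-unity localization in $v$ reduces this to a pointwise statement at an arbitrary $\xi\in S^*M$: as $A$ varies through small smooth polyhedra whose conormal cycles pass through $\xi$ as a smooth point, $T_\xi N(A)$ sweeps out every Legendrian $(n-1)$-plane of $T_\xi S^*M$, while $\tilde v(\xi)$ ranges over all of $T_\xi S^*M$ as $v$ varies; combined with the verticality $\alpha\wedge\omega=0$, this pointwise data forces $\omega_\xi=0$ via the primitive-form structure at the heart of Rumin's construction.

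For (2), assume $\mathcal F_\mu=0$ and $\Delta_\mu=0$. By the finite-additivity remark it suffices to treat $A$ inside a small coordinate ball around a chosen interior point $x_0$. Select a compactly supported vector field $v$ whose flow $F_t$ smoothly contracts $A$ toward $x_0$; by \eqref{eq:fmu 3} and $\Delta_\mu=0$, $\mu(F_tA)$ is independent of $t$. As $t\to\infty$ the interior piece $\int_{F_tA}\gamma$ tends to $0$ by volume collapse, and $\tilde F_tN(A)$ converges weakly to $\pi\inv(x_0)=N(\{x_0\})$, so $\mu(A)=\lim_t\mu(F_tA)=\mathcal F_\mu(x_0)=0$. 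The hardest parts of the argument are the two uniqueness-flavored facts on which (1) and (2) rest: the pointwise recovery of a vertical form from its pairings with Legendrian tangents to conormal cycles (requiring a Lefschetz--primitive decomposition in the contact hyperplane), and the weak convergence of the contracting conormal cycles $\tilde F_tN(A)$ to $\pi\inv(x_0)$ with correct orientation and multiplicity.
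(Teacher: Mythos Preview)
Your treatments of parts (3) and (2) are correct and mirror the paper's own sketch (given in the Remarks following the proposition) essentially line for line: for (3), the variation identity via Cartan's formula on the contact lift, the vanishing of $\int_{N(F_tA)}\alpha\wedge\psi$ by the Legendrian property, and the identification $\pi_*N(A)=\partial A$; for (2), the Euler-field contraction in a coordinate ball together with the weak convergence $N(tA)\to N(\{0\})$.

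Your argument for (1), however, has a genuine gap. You reduce to showing that a vertical $n$-form $\omega$ with $\int_{N(A)}\tilde v\without\omega=0$ for all $A\in\pol_c$ and all $v$ must vanish, and then assert this pointwise on the grounds that $T_\xi N(A)$ sweeps out every Legendrian $(n-1)$-plane while $\tilde v(\xi)$ ranges over $T_\xi S^*M$. Even granting the sweeping claim, the conclusion is false for $n\ge 3$. Take $\omega=\alpha\wedge d\alpha\wedge\eta$ for any $(n-3)$-form $\eta$ with $\alpha\wedge d\alpha\wedge\eta\ne 0$. Then $\alpha\wedge\omega=0$, and since both $\iota^*\alpha$ and $\iota^*d\alpha$ vanish for every Legendrian inclusion $\iota$, one has $(\tilde v\without\omega)|_{N(A)}\equiv 0$ identically; hence all the integrals vanish while $\omega\ne 0$. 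The phrase ``via the primitive-form structure at the heart of Rumin's construction'' does not repair this: primitive forms are precisely those killed by $d\alpha\wedge\,\cdot\,$, and the obstruction here sits exactly in the non-primitive part $\alpha\wedge d\alpha\wedge(\cdot)$.

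The paper does not attempt a direct proof of (1); it defers to the Bernig--Br\"ocker Kernel Theorem, which is really the conjunction of (2) and (3). The operative content of (1) used later in the paper is only that $\Delta$ is well defined on $\V(M)$, i.e.\ that $D\beta+\pi^*\gamma$ depends only on $\mu=\cur{\beta,\gamma}$, and this follows from (2)+(3): if $\cur{\beta,\gamma}=0$ then $\mathcal F_\mu=0$ and the contraction argument forces $\mu=0$, after which the Kernel Theorem gives $D\beta+\pi^*\gamma=0$.
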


\noindent{\bf Remarks.} 
\begin{enumerate} 
\item Assertions \eqref{item:kernel thm} and \eqref{item:ker2} encompass the Kernel Theorem of Bernig and Br\"ocker \cite{bebr07}, and  may be proved as follows. By finite additivity it is enough to show that $\mu(A) =0$ in the case where $M$ is a convex open set $U \subset\Rn$ containing $0$ and $A\subset U$ is convex. If $\Delta_\mu =0$ then taking $v$ to be the Euler vector field we find that $\mu(tA) = \mu(A)$ for $A \in \pol_c(U)$ and $0<t\le 1$. On the other hand, in terms of weak convergence of currents $N(tA) \to N(\{0\})$ as $t \downarrow 0$, so that $\mu(A) = \mathcal F_\mu(0)$.

\item The map $\mathcal F$ is surjective: given $f \in C^\infty (M)$ and representing the Euler characteristic $\chi$ as $\cur{\beta,\gamma}$, clearly $\mathcal F_{\lcur\pi^*f \cdot \beta,\gamma\rcur} =f$.  Furthermore, by \cite{rumin} and \eqref{eq:structure eqns}, the image of $\Delta$  consists precisely of the $n$-forms of the form $\pi^*\gamma + \omega, \gamma \in \Omega^n(M), \omega \in \Omega^n(S^*M)$, where $\omega $ is exact and $\alpha \wedge \omega=0$. Since the long exact Gysin sequence (cf. e.g. \cite{milnor}) for the cosphere bundle includes the segment
$$\begin{CD}
 H^n(M)@>{\pi^*}>> H^n(S^*M) @>{\pi_*}>> H^1(M) 
\end{CD}
$$
it follows that this image may also be characterized as the space of such $\pi^*\gamma + \omega$ for which $\omega $ is closed, $\alpha \wedge \omega=0$, and $\pi_* \omega \in \Omega^1(M)$ is exact.

 One may interpret this last condition from the valuation-theoretic perspective as follows. Given such $\gamma,\omega$ we wish to construct $\mu \in \V(M)$ with $\Delta_\mu =\pi^*\gamma +\omega$. Since $\Delta_{\cur{0,\gamma}} =\pi^*\gamma$, we may omit the first term. By finite additivity it is enough to give the value $\mu(A)$ in the case where $A$ is contractible. Replacing $\Delta_\mu$ by  $\omega$  in the variation formula and contracting $A$ to a point as above, we see that $\mu$ is well-defined if it is well-defined on singletons, i.e. iff $\mathcal F_\mu$ is well-defined; or in other words if given two smooth paths $\sigma_1,\sigma_2$ joining arbitrarily chosen points $x,y \in M$ there exist respective local primitives $\beta_i$ for $\omega$ along $\sigma_i$ such that $\pi_*\beta_1(x) - \pi_*\beta_1(y) = \pi_*\beta_2(x) - \pi_*\beta_2(y)$. By Stokes' theorem this is equivalent to the vanishing of $\pi_*\omega $ on the 1-cycle ${\sigma_1-\sigma_2}$.

\item  The relation \eqref{eq:structure eqns} implies that $\Delta_\mu$ is closed. It would be desirable to have a valuation-theoretic proof of this fact similar to the argument above, along the following (incomplete) lines.

Let $A,B \in \pol_c(M)$ and let $v,w$ be vector fields with flows $F,G$ respectively, such that $F_1(A) = G_1(A) = B$. Define the $n$-chains $\hat F: = F(N(A) \times[0,1]),\hat G:= G(N(A) \times[0,1])\subset S^*M$. Then by \eqref{eq:fmu 3} and the coarea formula
\begin{align*}
\int_{\hat F} \Delta_\mu&= \int_0^1 \int_{N(F_t(A))} v \without \Delta_{\mu} \, dt \\
&= \mu(B)- \mu(A) \\
&=\int_{\hat G} \Delta_\mu.
\end{align*}
A homotopy between $F,G$ then determines an $(n+1)$-chain $H$ with $\partial H = \hat F -\hat G$, so that by Stokes' theorem
$$
\int_H d\Delta_\mu = \left(\int_{\hat F}- \int_{\hat G}\right) \Delta_\mu = 0.
$$

\item As a side point, we recall that Rumin \cite{rumin} showed that every degree $n$ cohomology class of any $(2n-1)$-dimensional contact manifold contains a representative belonging to the contact ideal.
\end{enumerate}

\section{The main construction} 
The main theater is the oriented smooth $(3n-1)$-dimensional manifold
 \begin{equation}
 \Sigma:= \{(\xi,\zeta,\eta) \in (S^*M)^3: \pi\xi=\pi\eta = \pi \zeta,  \xi \ne \pm\eta,\zeta \in \overline{\xi,\eta} \},
 \end{equation}
 where $\overline{\xi,\eta}$ denotes the  open segment in $S^*_{\pi \xi}M$ joining $\xi,\eta$.
 Thus there are three associated projections 
 $$
 \xi, \zeta,\eta:\Sigma\to S^*M.
 $$
% as in the diagram
% \begin{equation}\label{diag:alebe}
%\begin{CD} 
% \Sigma@>\zeta>> S^*M\\
%@V(\xi,\eta)VV \\
%S^*M \times_M S^*M 
%\end{CD}
%\end{equation}
We view $\Sigma$ as the total space of a
 bundle over $M$ with fiber $F_x$ over $x\in M$ diffeomorphic to $S_xM \times (0,\frac \pi 2) \times S_xM$ with the diagonal and antidiagonal deleted, via the map 
 \begin{equation}\label{eq:explicit zeta}
 (\xi,t,\eta) \mapsto (\xi,\zeta:= [\cos t\,\bar\xi + \sin t \,\bar \eta],\eta) ,
 \end{equation} 
 where $\bar \xi, \bar \eta \in T^*_xM-\z$ are arbitrarily chosen representatives of $\xi = [\bar \xi],\eta = [\bar \eta]$, giving rise to an  open embedding
$$
\iota :\Sigma\to S^*M \times \left(0,\frac \pi 2\right) \times_M S^*M,
$$  
canonical up to orientation-preserving reparametrizations of $(0,\frac \pi 2)$.
Following our usual practice  we orient $\Sigma$ in terms of the local product structure $\Rn \times S^{n-1}\times (0,\frac \pi 2) \times S^{n-1}$.

\subsection{A completion of $\Sigma$}\label{sect:completion}
In the constructions of Theorems \ref{thm:ab}  and \ref{thm:well defined} below we will perform a number of fiber integrals with respect to the natural projections $\xi,\eta,\zeta$. On the face of it, these fiber integrals are problematic since these maps are not proper. However, this obstacle may be circumvented by working implicitly over a convenient completion $\tilde \Sigma$ with the following properties:
\begin{enumerate}
\item $\tilde \Sigma$ is  a smooth oriented manifold with corners, equipped with  a smooth proper map $\sigma :
\tilde \Sigma \to S^*M\times_M S^*M \times_M S^*M:= \{(\xi,\zeta,\eta)\in (S^*M)^3: \pi \xi = \pi\zeta = \pi\eta\}$,
%\item the image of $\tilde \Sigma$ under $\sigma$ is equal to the closure of $\Sigma$,
\item   the restriction of $\sigma $ to the interior of $\tilde \Sigma $ yields a   diffeomorphism with $\Sigma$,
\item each of the maps $\xi\circ \sigma, \eta\circ \sigma,\zeta\circ \sigma$ is the projection of a smooth fiber bundle over $S^*M$ with fiber equal to a smooth oriented compact manifold with corners.
\item \label{item:corner fibers} the map $(\xi \circ \sigma ,\eta\circ \sigma): \tilde \Sigma \to S^*M \times_M S^*M$ is the projection of a smooth fiber bundle with fiber given by a smooth compact oriented manifold with corners.
\end{enumerate}

We construct $\tilde \Sigma$ as the Cartesian product $\tilde S \times [0,1]$, where $\tilde S$ is the oriented blowup of $S^*M\times_M S^*M$ over the union of the diagonal and the antidiagonal. Introducing for convenience an auxiliary Riemannian metric, $\tilde S$ may  be realized as a fiber bundle over $M$ with fiber over $x$ equal to $\mathbb U(S_xM) \times [0,\pi ]$, where $\mathbb U(S_xM)= \{(\xi,\tau):\langle \xi,\tau\rangle = 0\}\subset S_xM \times S_xM$ is the unit tangent bundle of the unit cotangent sphere $S_xM$. 
The map $\tilde \Sigma \to S^*M\times _M S^*M$ of \eqref {item:corner fibers} is then realized fiberwise by
$$
(\xi, \tau, t) \mapsto (\xi,\cos t \,\xi + \sin t \, \tau) \in S_x M \times S_xM.
$$
The maps $\xi \circ \sigma,\eta\circ \sigma$ are the compositions of this map with the projections to the first and second factors, respectively. 
 The map $\zeta \circ \sigma$ is then 
$$
(\xi, \tau, t, r) \mapsto \cos ( r\,t) \,\xi + \sin (rt) \, \tau\in S_xM, \quad r \in [0,1],
$$
The blowdown map $\sigma$ is obtained by assembling the three maps just described.

\subsection{The normal cycle of a transverse intersection} The geometric meaning of $\Sigma$ lies in the following.
%For this it will be helpful to recall in detail how the orientation of the normal cycle of a submanifold $V \subset M$ is determined, with the help of the Riemannian structure we have introduced. We may then think of $N(V)$ as the submanifold of  the conormal vector bundle $\Nor(V)\subset T^*M$ consisting of all covectors of length 1. Identifying $\Nor(V)$ with the normal vector bundle $\subset TM$ via the Riemannian metric, the orientation of $\Nor(V)$ is determined by the condition that the exponential map $\Nor(V)\to M$ is an orientation-preserving diffeomorphism in the neighborhood of the zero section. Now $N(V)$ is oriented as the boundary of the submanifold of (co)vectors of length $\le 1$, i.e. if $\xi \in N(V)$ and the  derivative of the length function in the direction $v\in T_\xi \Nor(V)$ is positive, then an ordered basis $w_1,\dots,w_{n-1}$ for $T_\xi N(V)$ is positively oriented iff $v,w_1,\dots,w_{n-1}$ is a positively oriented basis for $T_\xi \Nor(V)$.

\begin{lemma}\label{lem:transverse intersect} If $A,B \in \pol( M)$ intersect transversely then $\xi^{-1}N(A)$ and $\eta\inv N(B)$ intersect transversely as submanifolds of $\Sigma$. The restriction of $\zeta$ to the intersection yields a diffeomorphism 
$$
\xi^{-1}N(A)\bullet\eta\inv N(B)\simeq N(A\cap B)-\left[(N(A) \cap \pi\inv B )\cup (N(B) \cap \pi\inv A)\right]
$$ 
of parity $(-1)^n$.
\end{lemma}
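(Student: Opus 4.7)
My plan is to reduce to a local model at a smooth intersection point, establish transversality of the two preimages in $\Sigma$ by a direct dimension count, then identify the $\zeta$-image using the earlier lemma (Section 3.2) that presents $N_x(A\cap B)$ as a union of segments $\overline{\xi,\eta}$, and finally chase orientation conventions to obtain the parity $(-1)^n$.

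Working locally near a point $(\xi_0,\zeta_0,\eta_0)$ of the intersection, I would reduce to the case that $A,B$ are transverse coordinate orthants in $\mathbb{R}^n$; the generic case to keep in mind is $A=\{x_n\le 0\}$, $B=\{x_{n-1}\le 0\}$ with $\xi_0 = \nu_A = e_n^*$ and $\eta_0 = \nu_B = e_{n-1}^*$. This is justified because $N(A)$ and $N(B)$ are piecewise smooth, a union of the conormal bundles of the strata of $A$ and $B$, so the analysis may be carried out on each pair of strata and linearized by transversality of $A$ and $B$. Adapted local coordinates on $\Sigma$ are $(x,\hat\xi,t,\hat\eta)$, reflecting the fibration $\Sigma\to S^*M\times_M(0,\pi/2)\times_M S^*M$ built in Section 4.1.

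In this model, $\xi^{-1}N(A)$ is cut out by $x_n = 0$ together with the $n-1$ fiber equations pinning $\hat\xi = \nu_A$, so has codimension $n$; similarly for $\eta^{-1}N(B)$. These $2n$ equations are independent: the two base equations $x_n = 0,\ x_{n-1}=0$ are independent by the transversality of $A$ and $B$, while the fiber equations on $\hat\xi$ and $\hat\eta$ involve independent directions of $T\Sigma$. Hence transversality holds in $\Sigma$ and the intersection has dimension $n-1$. A triple in the intersection has the form $(\xi,\zeta,\eta)$ with $\xi\in N_x(A)$, $\eta\in N_x(B)$, and $\zeta$ in the open arc $\overline{\xi,\eta}$; by the earlier lemma, such $\zeta$ range precisely over the interior points of some segment of $N_x(A\cap B)$, which coincides with $N(A\cap B)\setminus\bigl((N(A)\cap\pi^{-1}B)\cup(N(B)\cap\pi^{-1}A)\bigr)$ since transversality forces $N(A)\cap N(B)=\emptyset$. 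Since segment interiors are pairwise disjoint, $\xi$ and $\eta$ are recovered smoothly as the two endpoints of the segment through $\zeta$, so $\zeta$ is a diffeomorphism onto the claimed set.

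The main obstacle is the parity $(-1)^n$. This requires reconciling four orientation conventions: the product orientation on $\Sigma$; the preimage orientations of $\xi^{-1}N(A)$ and $\eta^{-1}N(B)$ from \S\ref{subsect:preimages}; the intersection convention from \S\ref{sect:orient intersect}; and the orientation of $N(A\cap B)$ as the boundary of a tubular neighborhood, twisted by the convention noted in \S\ref{sect:norient}. In the flat model I would pick an ordered basis of $T_{(\xi_0,\zeta_0,\eta_0)}\Sigma$ split as (i) vectors tangent to $\xi^{-1}N(A)$ only, (ii) vectors tangent to both, (iii) vectors tangent to $\eta^{-1}N(B)$ only, assemble them in the order prescribed by \S\ref{sect:orient intersect}, and compare with the product orientation of $\Sigma$ in order to orient block (ii). Pushing (ii) forward via $\zeta_*$ and matching against a standard oriented basis of $T_{\zeta_0}N(A\cap B)$ coming from its boundary identification then pins down the overall sign. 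The factor $(-1)^n$ should emerge from the reordering needed to place the arc parameter $\partial_t$ in the position it occupies as the ``radial'' interpolation direction for $N(A\cap B)$, combined with the $(-1)^{n-1}$ discrepancy already recorded in \S\ref{sect:norient} between the product and boundary orientations of $S^*M$. This bookkeeping is routine but error-prone, and is the genuine technical core of the lemma.
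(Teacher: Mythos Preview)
Your approach is essentially the paper's: reduce to a local linear model, verify transversality by a dimension count, identify the $\zeta$-image via the segment description of $N_x(A\cap B)$ from the earlier lemma, and then chase orientation conventions. The one point worth flagging is that the reduction to strata produces transverse coordinate \emph{planes} of arbitrary codimension, not half-spaces: the paper takes $A=\mathbb{R}^k\times 0\times\mathbb{R}^m$ and $B=\mathbb{R}^k\times\mathbb{R}^l\times 0$ with $k+l+m=n$, and carries out the orientation computation uniformly in $(k,l,m)$. Your ``generic case'' $A=\{x_n\le 0\}$, $B=\{x_{n-1}\le 0\}$ corresponds only to $l=m=1$, and since the intermediate sign contributions (from the orientations of $N(A)$, $N(B)$, the preimages, and the tangent-space splitting) genuinely depend on the codimensions, verifying that single case does not establish that the parity is always $(-1)^n$. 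The bookkeeping you outline is exactly what the paper does, but it must be done in the general-codimension model.
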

\begin{proof} Recall that the transversality of $A,B$ means precisely that the various strata of $A,B$ meet pairwise transversely. Thus by  Lemma \ref{lem:oriented n} the present assertion follows from the special case in which $A,B$ are properly embedded submanifolds (viz. strata) that intersect transversely. By diffeomorphism invariance we may even assume that  $M=\Rn$, endowed with the standard euclidean metric, and that
\begin{align*}
A&= \Rk\times 0\times \Rm, \\
B&= \Rk\times \Rl\times 0,
\end{align*}
$ \ k + l + m = n,$ are transverse coordinate planes.

Denote by $S, S_A\subset S, S_B\subset S$ the spheres of $\Rn,0\times \Rl \times 0, 0\times 0 \times \Rm$ respectively, with their standard orientations. 
Thus we have the equalities of oriented manifolds
\begin{align*}
S\Rn &= \Rn \times S,\\
\Sigma&= \Rn \times S \times \I \times S,\\
N(A) & = (-1)^{k + (l-1)m}(\Rk\times 0\times\Rm)\times S_A\\
N(B) & =  (-1)^{k+l}(\Rk\times  \Rl \times 0) \times S_B.
\end{align*}
Here the numerical factors signify corresponding changes of orientation, induced by the requirement (stated in the paragraph preceding Lemma \ref{lem:oriented n}) that the map $(p, q) \mapsto p+q$, restricted to the normal cycle of $X$, gives an orientation-preserving diffeomorphism to the boundary of a tubular neighborhood of $X$. Here we also take into account the fundamental relation
$$
\partial (C\times D) = \partial C \times D + (-1)^{\dim C} C \times \partial D.
$$
Then
 \begin{align*}
\xi\inv N(A) &=(-1)^{k + (l-1)m}( \Rk\times 0\times\Rm) \times  S_A \times \I \times S,\\
\eta\inv N(B) &=(-1)^{ k+l+n(m-1)}(\Rk\times  \Rl \times 0) \times S \times \I \times S_B \\
\end{align*}
as oriented manifolds, following the convention of Section \ref{subsect:preimages},
with unoriented intersection 
\begin{equation}\label{eq:prodint}
\xi\inv N(A)\cap\eta\inv N(B)= ( \Rk\times 0 \times 0) \times S_A \times \I \times S_B.
\end{equation}
The image under $\zeta$ of the oriented product represented by the right hand side of \eqref{eq:prodint} is clearly equal to $(-1)^kN(A\cap B)$, with negligible sets (smooth submanifolds of positive codimension) deleted.
Thus it remains to show that 
\begin{align}
\notag[( \Rk\times 0 \times \Rm)\times  S_A &\times \I \times S ]\bullet[( \Rk\times \Rl \times 0) \times S \times \I \times S_B] 
\\
\label{eq:prodint2} &= (-1)^{mk + l + k}( \Rk\times 0 \times 0) \times S_A \times \I \times S_B.
\end{align}
Here the parity on the right is the solution $x$ of the mod 2 congruence
$$
(k+ (l-1)m) + (k + l + n(m-1)) + x \equiv n + k.
$$

Given $v \in S_A, w \in S_B$, we have the equalities of oriented tangent spaces
\begin{align*}
T_v S & = (-1)^k \Rk\oplus T_A \oplus \Rm,\\
T_w S & = (-1)^{k+l}\Rk\oplus \Rl \oplus T_B,
\end{align*}
where we abbreviate $T_A:= T_vS_A, \ T_B:= T_wS_B$, with $\dim T_A = l-1, \dim T_B = m-1$. Referring to the left hand side of \eqref {eq:prodint2}, we thus find that
 at a representative point $(0,v,t,w)\in ( \Rk\times 0 \times 0) \times S_A \times \I \times S_B$ the tangent spaces to the first factor, the intersection, the second factor, and the total space are the respective oriented direct sums
\begin{align*}
(-1)^{k+l}(\Rk\oplus 0 \oplus \Rm  ) \oplus( 0\oplus T_A \oplus 0) &\oplus \R \oplus  (\Rk\oplus \Rl \oplus T_B),
\\
(\Rk\oplus 0\oplus 0 ) \oplus( 0\oplus T_A \oplus 0) &\oplus \R \oplus  (0\oplus 0\oplus T_B),
\\
(-1)^{k}(\Rk\oplus \Rl \oplus 0 ) \oplus( \Rk\oplus T_A \oplus \Rm) &\oplus \R \oplus  (0\oplus 0\oplus T_B),
\\
(-1)^{l}(\Rk\oplus \Rl \oplus \Rm ) \oplus( \Rk\oplus T_A \oplus \Rm) &\oplus \R \oplus  (\Rk\oplus \Rl \oplus T_B).
\end{align*}
Employing the orientation convention of Section \ref{sect:orient intersect} with care and attention, this yields the stated conclusion.
\end{proof}

\noindent {\bf Remark.} There is a corresponding, and somewhat simpler, calculation that takes place in the full cotangent bundle, replacing $S^*M$ by $T^*M$, $N$ by $\Nor$ and $\zeta: \Rn \times S^{n-1}\times \I \times S^{n-1} \to S\Rn$ by the map
$$
\Rn \times \Rn \times \Rn \to \Rn \times \Rn, \quad (x,v,w) \mapsto (x, v+w).
$$
Lemma \ref{lem:transverse intersect} then follows from a functoriality argument using Lemma \ref{lem:oriented n}. Although this approach is in some ways more convincing, we have opted for the present approach since  the functoriality construction seems a bit too fancy here.

\subsection{The Alesker-Bernig formula} Alesker-Bernig \cite{ale-be09} showed that the product of valuations may be expressed in terms of their underlying differential forms as in Theorem \ref{thm:ab} below. For our purposes we may take their formula, given below, as the definition of the product of a valuation and a curvature measure.

 \begin{theorem}[Alesker-Bernig \cite{ale-be09}, Theorem 2]\label{thm:ab} Let 
 $$
 \mu \in \V(M), \ \beta \in \Omega^{n-1}(S^*M), \ \gamma \in \Omega^n(M).
 $$ Then 
\begin{equation}\label{eq:ab beta gamma}
\mu \cdot [{\beta,\gamma}] = [\theta,\psi],
\end{equation}
 where
\begin{align}
\label{eq:alb1}\theta &= \xi_*(  \zeta^* \beta\wedge \eta^*s^*\Delta _\mu) + \pi^* \mathcal F_\mu \cdot \beta, \\
\label{eq:alb2}\psi&= \pi_*( \beta \wedge s^*\Delta_ \mu ) + \mathcal F_\mu \cdot \gamma .
\end{align}
\end{theorem}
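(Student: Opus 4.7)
The plan is to take seriously the intersection heuristic \eqref{eq:heuristic2}: if $\mu \cdot [\beta,\gamma]$ is morally the operation $A \mapsto [\beta,\gamma](A \cap X, \cdot)$ averaged over a family producing $\mu$, then the Alesker--Bernig formula should emerge by applying Lemma \ref{lem:transverse intersect} to decompose $N(A \cap \varphi_p(X))$ and then converting the geometric intersections back into a differential-form expression via Lemma \ref{lem:basics}. I would proceed in three stages.

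First, I would reduce to the case where $\mu$ is a principal kinematic valuation $\nu = \int_P \chi(\,\cdot \cap \varphi_p(X))\,dp$ built from an admissible family $(P,\varphi,dp)$ and a compact smooth polyhedron $X$. By Proposition \ref{prop:unique}\eqref{item:kernel thm}, a curvature measure is uniquely determined by the pair $(\mathcal F, \Delta)$ of its associated valuations $[\theta,\psi]_f$, $f \in C^\infty(M)$. Thus it suffices to show that for all $f$ both sides of \eqref{eq:ab beta gamma} produce valuations with matching point function and Rumin form; the $\nu$ case is the critical one because there we can identify the \emph{intended} product concretely as $(\nu \cdot [\beta,\gamma])(A,f) = \int_P [\beta,\gamma](A \cap \varphi_p(X), f)\,dp$.

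Second, I would evaluate this right-hand side geometrically. For a.e.\ $p \in P$ the polyhedra $A$ and $\varphi_p(X)$ meet transversely, so by Lemma \ref{lem:transverse intersect} the cycle $N(A \cap \varphi_p(X))$ splits, modulo negligible sets, into (i) the piece of $N(A)$ over the interior of $\varphi_p(X)$, (ii) the piece of $N(\varphi_p(X))$ over the interior of $A$, and (iii) the arc piece $\zeta\bigl(\xi^{-1}N(A) \bullet \eta^{-1}N(\varphi_p(X))\bigr)$, with a $(-1)^n$ sign of diffeomorphism. Integrating $\beta$ over each piece, and $\gamma$ over the interior of $A \cap \varphi_p(X)$, and then averaging over $P$, I would apply \eqref{eq:intersect integrate 2} of Lemma \ref{lem:basics} to the $\varphi_p$-family: piece (ii) converts to a fiber integral over $S^*M$ involving the form associated with $\nu$, which by Proposition \ref{prop:unique}\eqref{item:ker2} is precisely $\Delta_\nu$ (up to the antipodal pullback $s^*$, coming from the ``outward versus inward normal'' convention), yielding the $\pi_*(\beta \wedge s^*\Delta_\nu)$ term in $\psi$. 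Piece (iii) similarly converts into a fiber integral over the auxiliary manifold $\Sigma$, yielding the term $\xi_*(\zeta^*\beta \wedge \eta^* s^* \Delta_\nu)$ in $\theta$. The interior terms and the parts of $N(A)$ lying over fixed points of the action produce the $\pi^*\mathcal F_\nu \cdot \beta$ and $\mathcal F_\nu \cdot \gamma$ contributions, since contracting a smooth polyhedron to a point replaces $\mu$ by $\mathcal F_\mu$ at that point (compare Remark 1 following Proposition \ref{prop:unique}).

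Third, I would pass from kinematic $\nu$ to arbitrary $\mu$. The right-hand side of \eqref{eq:ab beta gamma} depends on $\mu$ only through the pair $(\mathcal F_\mu, \Delta_\mu)$, so once the formula is verified whenever the kinematic presentation exists, uniqueness (Proposition \ref{prop:unique}) upgrades it to a well-defined bilinear operation on $\V(M) \times \C(M)$ which agrees with the Alesker product wherever both are defined. The \emph{main obstacle} I expect is two-fold: first, the projection $\xi : \Sigma \to S^*M$ is not proper, so the fiber integral in \eqref{eq:alb1} must be interpreted on the completion $\tilde\Sigma$ of Section \ref{sect:completion}, and one must verify that no boundary terms from the degenerate arcs $\xi = \pm\eta$ leak into the formula; second, the $(-1)^n$ parity and the antipodal pullback $s^*$ appearing in Lemma \ref{lem:transverse intersect} and the definition of $\Delta_\mu$ must be tracked through every orientation convention fixed in Sections \ref{sect:general} and \ref{sect:norient}, which is the place where a seemingly routine argument can easily go wrong.
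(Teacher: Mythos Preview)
The paper does not prove Theorem~\ref{thm:ab}: it is quoted from \cite{ale-be09} and explicitly \emph{taken as the definition} of the product $\mu\cdot[\beta,\gamma]$. So there is no ``paper's own proof'' to compare against; rather, the paper runs the logic in the opposite direction from your proposal. It assumes Theorem~\ref{thm:ab}, then proves Theorems~\ref{thm:well defined} and~\ref{thm:nu chi} independently, and finally combines all three to obtain Theorem~\ref{thm:main}. Your stages 1--2 are essentially the content of Theorems~\ref{thm:well defined} and~\ref{thm:nu chi}, but repackaged as if they furnished a proof of Theorem~\ref{thm:ab}.

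Your stage 3 is where the argument breaks. You write that ``once the formula is verified whenever the kinematic presentation exists, uniqueness (Proposition~\ref{prop:unique}) upgrades it'' to all of $\V(M)$. But Proposition~\ref{prop:unique} only says that $(\mathcal F_\mu,\Delta_\mu)$ determines $\mu$; it does \emph{not} say that every pair $(\mathcal F,\Delta)$ in the relevant image is realized by a principal kinematic valuation. To push the identity from kinematic $\nu$ to arbitrary $\mu$ you would need either that every $\mu\in\V(M)$ is kinematic, or that kinematic valuations are dense in a topology in which both sides of \eqref{eq:ab beta gamma} are continuous. The paper flags exactly this as an open problem in the ``Concluding introductory remarks'': \emph{if and when these statements are established} the Alesker--Bernig formula would follow from the intersection picture. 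Until then, your stage 3 is a gap, not a step.

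A smaller issue in stage 2: the identification of the smoothed form $\omega$ with $(-1)^n s^*\Delta_\nu$ does not drop out of Proposition~\ref{prop:unique}\eqref{item:ker2} as you suggest. In the paper this is Theorem~\ref{thm:nu chi}, and its proof requires the Morse-theoretic Lemmas~\ref{lem:morse1} and~\ref{lem:morse2}; the relation $\Delta_\mu = D\beta + \pi^*\gamma$ alone does not let you read off $\Delta_\nu$ without first knowing a presentation $\nu=\lcur\beta,\gamma\rcur$ and computing a Rumin differential.
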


\noindent{\bf Remarks.} Replacing the forms $\beta,\gamma,\theta,\psi$ by their multiples by smooth functions $f$, \eqref{eq:ab beta gamma} is equivalent to the same formula with the curvature measures $[\cdot,\cdot]$ replaced by valuations $\lcur\cdot,\cdot \rcur$.

The key term here is the first summand on the right hand side of \eqref{eq:alb1}. Formally, this expression differs from that of the original source \cite{ale-be09} in two respects: there the antipodal map $s$  is not involved, and our space $\Sigma$, consisting of all $(\xi, \zeta, \eta) $ such that $\zeta $ lies on the segment joining $\xi$ to $\eta$, is replaced there by the manifold of all such triples such that $\xi$ lies on the segment joining $\eta$ to $\zeta$.  Thus the two discrepancies cancel each other, since $\xi,\zeta,\eta$ satisfy the  first condition iff $\xi, s(\eta),\zeta$ satisfy the second.

Another cosmetic difference between our formulas and those of \cite{ale-be09} is the presence of the term $\pi^* \gamma$ in the second factor of the first term of \eqref{eq:alb1}. It is easy  to see that $\xi_*(  \zeta^* \beta\wedge \eta^*\pi^* \gamma) =0$ for dimensional reasons.

\section{Kinematic valuations} \label{sect:kin val}

We arrive finally at the main construction of this paper.

\begin{definition}\label{def:kin val}  Let $M$ be a smooth oriented manifold.
\begin{enumerate}
\item \label{item:admissible} An {\bf admissible measured family} of diffeomorphisms of $M$ is a triple $(P,dp, \varphi)$ where \begin{itemize}
\item $P$ is a smooth oriented manifold.
\item $\varphi :P\times M\to M$ is a smooth map such that each $\varphi_p:= \varphi(p,\cdot)$ is an orientation-preserving diffeomorphism of $M$.
\item The restriction to $\supp dp \times M$ of the map $(p,x) \mapsto (\varphi_p(x), x)$ is proper.
\item Put $\tilde \varphi:P \times S^*M \to S^*M$ for the induced family of contact diffeomorphisms of the cosphere bundle. Then  for each $\xi \in S^*M$ the map $p\mapsto \tilde \varphi(p,\xi)$ is a submersion.
\end{itemize}
\item A {\bf kinematic valuation} on a smooth oriented manifold $M$ is a set function $\nu= \nu(\mu,X,P,dp,\varphi)$ determined by the following data:
\begin{itemize}
\item a smooth valuation $\mu \in \V(M)$,
\item a compact smooth polyhedron $X \in \pol_c(M)$,
\item  an admissible measured family $(P, dp, \varphi)$ of diffeomorphisms of $M$.
\end{itemize} 
We then put

\begin{equation}\label{eq:alt def}
\nu(A):= \int_P \mu(A\cap \varphi_p(X)) \, dp.
\end{equation}

\item If $\mu = \chi$, then $\nu$ is a {\bf principal kinematic valuation}.
\end{enumerate}
\end{definition} 

It is clear that if $(M,G)$ is Riemannian isotropic in the sense described in the Introduction, and $dg$ is  Haar measure on $G$, then $(M,dg, G)$ is an admissible measured family of diffeomorphisms of $M$.
 
%\noindent {\bf Remark.} The historical origin of this notion is as follows. Let $M$ be an oriented Riemannian manifold equipped with a group $G$ of isometries that acts transitively on the sphere bundle $S^*M$ (i.e. $(M,G)$ is a {\it Riemannian isotropic space}). Taking $P=G$ and $dp=$ Haar measure on $G$, then any $\mu,X$ as above determine a kinematic valuation $\nu$ on $M$. If $M$ is $G$-invariant then $\nu$ is itself $G$-invariant. These facts are proven in \cite{fu90}.
 
\subsection{Kinematic valuations are smooth}\label{sect:smooth} A priori it is not clear under what conditions $\nu(A)$ is well-defined. We show now that this is the case whenever $A \in \pol_c(M)$, and that $\nu$ is in fact given by an element of $\V(M)$ in this case. To prove this we express $\nu$ explicitly as $\lcur{\theta,\psi}\rcur$ using the constructions above.
% Afterward we will establish Theorem \ref{thm:main} by showing that this expression constitutes an instance of the Alesker-Bernig formula Theorem \ref{thm:ab}. 

Observe first that by Lemma \ref{lem:basics} the $(n-1)$-dimensional current 
$$
\beta \mapsto \int_P \left(\int_{\tilde \varphi_{p}( N(X))}\beta\right) \ dp = \int_P \left(\int_{N(\varphi_{p}( X))}\beta \right)\ dp 
$$
is smooth. Let $\omega \in \Omega^n (S^*M)$ denote the associated differential form. Since $N(X)$ is closed and Legendrian it follows that
$$
d \omega = \alpha \wedge \omega =0.
$$
Define also $f \in C^\infty(M)$ by
$$
f(x) :=  dp(\{p\in P: x \in \varphi_p(X)\}).
$$

\begin{theorem}\label{thm:well defined} Let $(P,dp,\varphi)$ be an admissible measured family of diffeomorphisms of $M$, and $X \subset M$ a compact smooth polyhedron. Then
\begin{enumerate}
\item \label{item:well defined 1}If $A \in \pol_c(M)$ then $A$ and $\varphi_p(X)$ intersect transversely for a.e. $p \in P$. 
\item \label{item:well defined 2} The integral on the right hand side of \eqref{eq:alt def} converges absolutely.
\item \label{item:well defined 3} The resulting set function $\nu$ is a smooth valuation.
More precisely, if  $\mu =\lcur \beta,\gamma \rcur$  then $\nu = \lcur \theta,\psi \rcur$, where
\begin{align}
\label{eq:smexp1}\theta &:= (-1)^n \xi_*(  \zeta^* \beta\wedge \eta^*\omega) + \pi^*f \cdot \beta, \\
\label{eq:smexp2}\psi&:=  \pi_*(  \beta\wedge \omega ) + f\cdot \gamma ,
\end{align}
with $\omega, f$ defined as above.
\end{enumerate}
\end{theorem}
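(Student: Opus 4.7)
My plan is as follows. For part \eqref{item:well defined 1}, the submersivity hypothesis on $\tilde\varphi$ descends via $\pi:S^*M\to M$ to give that $\varphi$ itself is submersive in the $P$-direction on each stratum of $M$. Parametric transversality then guarantees that every stratum $A_i$ of $A$ meets every stratum $\varphi_p(X_j)$ of $\varphi_p(X)$ transversely for $p$ outside a $dp$-null set; taking the union over the finitely many pairs $(i,j)$ yields the full-measure subset of $P$ on which $A$ and $\varphi_p(X)$ intersect transversely in the sense of Section \ref{sect:normal cycles}. Part \eqref{item:well defined 2} follows from the properness hypothesis: the locus $\{p \in \supp dp : A \cap \varphi_p(X) \neq \emptyset\}$ is compact, and on this locus $|\mu(A\cap \varphi_p(X))|$ is uniformly bounded by the integrals of $|\beta|$ and $|\gamma|$ against the three geometrically controlled pieces of $N(A\cap\varphi_p(X))$ appearing below.

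For part \eqref{item:well defined 3}, the heart of the argument, I would proceed as follows. For a.e.\ $p$, Lemma \ref{lem:transverse intersect} decomposes $N(A\cap\varphi_p(X))$, up to a negligible subset, as the disjoint union of three pieces: (i) the image under $\zeta$ of $\xi\inv N(A)\bullet\eta\inv N(\varphi_p(X))\subset \Sigma$, with orientation parity $(-1)^n$; (ii) the portion $N(A)\cap\piinv\varphi_p(X)$ lying over $\varphi_p(X)$; and (iii) the portion $N(\varphi_p(X))\cap\piinv A$ lying over $A$. Substituting $\mu=\lcur\beta,\gamma\rcur$ splits $\mu(A\cap\varphi_p(X))$ into four integrals, one per piece plus the interior integral of $\gamma$:
\begin{align*}
\mu(A\cap\varphi_p(X)) &= (-1)^n\int_{\xi\inv N(A)\bullet\eta\inv N(\varphi_p(X))}\zeta^*\beta + \int_{N(A)\cap\piinv\varphi_p(X)}\beta\\
&\quad{}+ \int_{N(\varphi_p(X))\cap\piinv A}\beta + \int_{A\cap\varphi_p(X)}\gamma.
\end{align*}

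The next step is to integrate each summand against $dp$ and match with one of the four summands of $\int_{N(A)}\theta + \int_A\psi$. The last two terms become $\int_{N(A)}\pi^*f\cdot\beta$ and $\int_A f\cdot\gamma$ respectively, by Fubini and the definition of $f$. The third summand, after swapping the order of integration, equals $\int_{\piinv A}\beta\wedge\omega = \int_A \pi_*(\beta\wedge\omega)$ by the defining relation of the smoothed form $\omega$, after a routine approximation of the indicator $1_{\piinv A}$ (or by slicing as in Lemma \ref{lem:slicing}). The critical first term is handled by applying Lemma \ref{lem:basics}\eqref{item:basic1} with $M'$ the completion $\tilde\Sigma$ of Section \ref{sect:completion}, projection $\eta:\tilde\Sigma\to S^*M$, compact oriented submanifold $Y = \xi\inv N(A)$, and admissible family $\tilde\varphi$ acting on $N(X)\subset S^*M$. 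This yields
\[
\int_P\int_{\xi\inv N(A)\bullet\eta\inv N(\varphi_p(X))}\zeta^*\beta\, dp \;=\; \int_{\xi\inv N(A)}\eta^*\omega\wedge\zeta^*\beta \;=\; \int_{N(A)}\xi_*(\zeta^*\beta\wedge\eta^*\omega),
\]
the last equality via \eqref{eq:fiber int over z} and the parity identity $(-1)^{n(n-1)}=1$. Multiplying by the outer $(-1)^n$ reproduces exactly the first summand of $\theta$.

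I expect the main obstacle to be sign and orientation management: the $(-1)^n$ from Lemma \ref{lem:transverse intersect}, the orientations of preimages and of transverse intersections under the conventions of Sections \ref{subsect:preimages} and \ref{sect:orient intersect}, and the various wedge-product commutations must all be reconciled in concert. A subsidiary technical issue is that the natural projections $\xi,\eta,\zeta$ from $\Sigma$ are not proper, so the fiber-integration steps must be executed on the completion $\tilde\Sigma$ --- this is precisely what the fiber-bundle properties demanded in Section \ref{sect:completion} were introduced to permit. Once these bookkeeping matters are settled, the three-piece decomposition of $N(A\cap\varphi_p(X))$ matches the summands of $\lcur\theta,\psi\rcur$ term by term and the identity $\nu=\lcur\theta,\psi\rcur$ follows.
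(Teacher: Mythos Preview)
Your proposal is correct and follows essentially the same route as the paper: decompose $N(A\cap\varphi_p(X))$ into the three pieces of Lemma~\ref{lem:transverse intersect}, obtain four integrals, and match them term-by-term to the four summands of $\lcur\theta,\psi\rcur$ via Fubini and Lemma~\ref{lem:basics}\eqref{item:basic1}. The only place the paper is more explicit is in part~\eqref{item:well defined 2}, where the absolute convergence of the interpolation term is argued by exhibiting the relevant domain as a compact submanifold-with-corners of $P\times\tilde\Sigma$ rather than by a uniform-boundedness claim; also note that what you call ``the last two terms'' are actually your second and fourth.
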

 
 \begin{proof} Conclusion \eqref{item:well defined 1} follows at once from
conclusion \eqref{item:basic1} of Lemma \ref{lem:basics}.

\eqref{item:well defined 2}: If $A $ and $\varphi_p(X)$ intersect transversely then clearly $N(A) \cap N(\varphi_p(X) ) = N(A) \cap sN(\varphi_p(X) )=\emptyset$. Thus we may write
$
N(A \cap \varphi_p(X))$ as the disjoint union of the three pieces
$$ N(A) \cap \pi\inv \varphi_p(X) , \ N(\varphi_p(X)) \cap \pi\inv A,  \,  N(A\cap \varphi_p(X)) - [\pi \inv A \cup \pi\inv \varphi_p(X)]
$$
and accordingly
\begin{align}
\notag\nu(A) &= \int_P \mu(A \cap \varphi_p(X))\, dp\\
\notag& = \int_P \left[\int_{N(A \cap \varphi_p(X))}\beta+ \int_{A\cap \varphi_p(X)}\gamma \right]\, dp \\
\label{eq:four terms}&=  \int_P \int_{N(A\cap \varphi_p(X)) - [\pi \inv A \cup \pi\inv \varphi_p(X)]}\beta \, \, dp + \int_P \int_{N(A) \cap \pi\inv \varphi_p(X)}\beta \, \, dp\\
\notag&  \quad  +   \int_P \int_{N(\varphi_p(X)) \cap \pi\inv A}\beta \, \, dp + \int_P\int_{A\cap \varphi_p(X)}\gamma \, dp .
\end{align}

It is clear that the second, third, and fourth of these integrals are all absolutely convergent. It remains to show that the same is true of the first. 

Recalling the construction of the space $\tilde \Sigma$ from Section \ref{sect:completion}, consider the map $\Psi:=(\xi\circ\sigma,\tilde \varphi_p\inv \circ \eta\circ \sigma):P\times \tilde \Sigma \to S^*M \times S^*M$. By condition \eqref{item:corner fibers} of Section \ref {sect:completion}, the submersivity condition on $\tilde \varphi$ implies that $\Psi$ is again a submersion, and that  the same is true of its restriction to the strata of the manifold with corners $P\times \tilde \Sigma$. In particular each such restriction is transverse to $N(A) \times N(X)$. It follows that the preimage of $N(A) \times N(X)$ is again a smooth properly embedded submanifold with corners of $P \times \tilde \Sigma$. By the properness condition on $\varphi$, the intersection of this submanifold with corners with the support of $dp$ is compact. By Lemma \ref{lem:transverse intersect}, the term in question is the integral over this set of a continuous object (viz. the product of the pullbacks of the differential form $\beta$ and the density $dp$). Thus the conclusion follows.

\eqref{item:well defined 3} We show that the four terms of \eqref{eq:four terms} equal respectively the four  terms in \eqref{eq:smexp1}, \eqref{eq:smexp2}. 

The second and fourth of these equalities, i.e.
\begin{align*}
\int_{N(A)} \pi^* f \cdot \beta &= \int_P \int_{N(A) \cap \pi\inv \varphi_p(X)}\beta \, \, dp, \\
\int_A f \cdot \gamma &= \int_P \int_{\varphi_p(X)\cap A} \gamma \, dp
\end{align*}
follow at once from the Fubini-Tonelli theorem.
The third identity follows  from \eqref{eq:intersect integrate 2}, with $\Sigma $ replaced by $S^*M$, 
$Y$  by $\pi\inv A \subset S^*M$, $X$ by $N(X)$, and $\phi$ by $\tilde \varphi$.

For the first identity, by Lemma \ref{lem:transverse intersect} the domains of integration of the inner integrals are
\begin{align*}
N(A\cap \varphi_p(X)) - [\pi \inv A \cup \pi\inv \varphi_p(X)]
&= (-1)^n\zeta(\xi\inv N(A)\bullet\eta\inv\tilde \varphi_pN(X)).
\end{align*}
 Thus by \eqref{eq:intersect integrate 3} and \eqref{eq:fiber int over z} the term in question becomes
\begin{align*}
 (-1)^n\int_P \int_{\xi\inv N(A)\bullet\eta\inv\tilde \varphi_pN(X)}\zeta^*\beta \, \, dp& =
(-1)^n \int_{\xi\inv N(A)}   \eta^* \omega\wedge \zeta^*\beta \\
 & =  (-1)^n\int_{N(A)} \xi_*(   \eta^* \omega\wedge \zeta^*\beta )
\end{align*}
as claimed.
\end{proof}

%
%{\bf Claim 1.} $ \int_P  \int_{A\cap \varphi_p(X)}\gamma \, dp = \int_A \psi_1$.
%
%This follows from the calculation
%\begin{align*}
% \int_P  \int_{A\cap \varphi_p(X)}\gamma \, dp &= \int_P\left( \int_{y = \varphi_p(x), y \in A, x \in X} \varphi^*\gamma\right) \, dp \\
%  & = \int_{\Delta_1\cap (y,\varphi)\inv(A\times X)} dp \wedge \varphi^* \gamma \\
%&= \int_{A\times X} \bar \psi_1 .
%\end{align*}
%
%Next we show how the terms $\psi_2,\theta_1,\theta_2$ all arise from the first integral above.
%Let $f,g$ be auras for $A,X$ respectively,  and $r$ a length function compatible with both. By the hypotheses on $\varphi$, for a.e. $p\in P$ the function $f + \varphi_p^* g$ is a defining function for $A \cap \varphi_p(X)$. 
%
%Consider the map
%$$
%\Phi:(P\times T^*M )\times_M T^*M  \to T^*M, \quad \Phi(p,\xi,\eta) := \xi +\tilde \phi_p(\eta)
%$$
%Then
%$$
%\Phi (\{p\} \times \partial f \times \partial g) = \partial (f + \varphi^* g)
%$$

\subsection{An alternative characterization of principal kinematic valuations }
Now assume that $\mu = \chi$, and define $f,\omega$ as  above. Following section \ref {sect:variation}, the resulting principal kinematic valuation may be characterized in terms of its variation and point function as follows.

\begin{theorem}\label{thm:nu chi} The smooth valuation $\nu:= \nu(\chi,X,P, dp,\varphi)$ is characterized by the relations
$$
\Delta_\nu =   (-1)^n s^*\omega, \quad \mathcal F_\nu =f.
$$
\end{theorem}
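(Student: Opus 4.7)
The proof rests on the uniqueness statement in Proposition~\ref{prop:unique}\eqref{item:uniqueness}: the pair $(\mathcal{F}_\nu,\Delta_\nu)$ is determined by the three defining properties \eqref{eq:fmu 1}--\eqref{eq:fmu 3}, so it suffices to check that the candidate $(f,(-1)^n s^*\omega)$ satisfies all three. Property \eqref{eq:fmu 1} is immediate from Fubini:
\[
\nu(\{x\}) = \int_P \chi(\{x\}\cap \varphi_p(X))\,dp = \int_P \mathbf{1}[x \in \varphi_p(X)]\,dp = f(x).
\]
The verticality \eqref{eq:fmu 2} reduces to $\alpha\wedge\omega=0$, which holds because each $N(\varphi_p(X))$ is Legendrian so $\alpha$ vanishes on its tangent spaces; testing against arbitrary forms promotes this to an identity for the smooth form $\omega$. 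Applying $s^*$ and using $s^*\alpha=-\alpha$ gives $\alpha\wedge s^*\omega=0$, hence $\alpha\wedge (-1)^n s^*\omega=0$.

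For the variational identity \eqref{eq:fmu 3}, direct differentiation of $\int_P \chi(F_t(A)\cap\varphi_p(X))\,dp$ is awkward since the integrand is only piecewise constant in $p$. Instead, I would invoke Theorem~\ref{thm:well defined}\eqref{item:well defined 3} to realize $\nu=\cur{\theta,\psi}$, with $\theta,\psi$ built explicitly from $\beta_\chi,\gamma_\chi,f,\omega$, where $\chi=\cur{\beta_\chi,\gamma_\chi}$ satisfies $\pi_*\beta_\chi=1$ and $D\beta_\chi=-\pi^*\gamma_\chi$ (the latter because $\Delta_\chi=0$). Proposition~\ref{prop:unique}\eqref{item:ker2} then yields $\Delta_\nu = D\theta + \pi^*\psi$, reducing \eqref{eq:fmu 3} to the form-level equation
\[
D\theta + \pi^*\psi = (-1)^n s^*\omega.
\]

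This equation is verified by direct computation modulo the contact ideal $\alpha\wedge\Omega^{\ast}(S^*M)$. The key inputs are $d\omega=0$ (closedness of each $N(\varphi_p(X))$), $\alpha\wedge\omega=0$, the commutation $d\xi_* = \xi_* d$ valid on the compactification $\widetilde\Sigma$ of Section~\ref{sect:completion}, the projection formula, and the crucial vanishing $\xi_*(\eta^*\omega)=0$: the map $\eta$ sends each fiber of $\xi:\Sigma\to S^*M$ into the $(n-1)$-sphere $S^*_xM$, on which the $n$-form $\omega$ necessarily pulls back to zero. These identities produce cancellations that collapse $d\theta + \pi^*\psi$ modulo $\alpha\wedge\Omega^{\ast}$ to $(-1)^n s^*\omega$; since both sides are vertical, the modular equality promotes to genuine equality.

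The main obstacle is organizing this cancellation carefully, in particular tracking the orientation-induced signs from fiber integration over the $(3n-1)$-dimensional space $\Sigma$ so that the factor $(-1)^n$ and the antipodal map $s^*$ emerge naturally---parallel to the sign considerations already encountered in the proof of Theorem~\ref{thm:well defined} via Lemma~\ref{lem:transverse intersect}.
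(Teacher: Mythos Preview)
Your strategy diverges from the paper's in an interesting way. The paper verifies the variational identity \eqref{eq:fmu 3} \emph{geometrically}: it reduces to the case where $A$ is a smooth domain, replaces the flow $F_t(A)$ by sublevel sets $g^{-1}(-\infty,t]$ of a Morse function, invokes Lemma~\ref{lem:morse2} to express $\chi(\varphi_p(X)\cap g^{-1}(-\infty,t])$ as a signed count of intersections $[\Gamma]\bullet sN(\varphi_p(X))$, and then applies Lemma~\ref{lem:basics} to convert the integral over $P$ of these intersection numbers into $\int s^*\omega$. Your route, by contrast, is entirely form-algebraic: write $\nu=\cur{\theta,\psi}$ via Theorem~\ref{thm:well defined}\eqref{item:well defined 3} and compute $\Delta_\nu=D\theta+\pi^*\psi$ directly. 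This is a legitimate alternative in principle, and it has the appeal of avoiding the Morse-theoretic reduction.

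However, your sketch of the key step contains a genuine gap. You propose to compute $d\theta+\pi^*\psi$ modulo the contact ideal, obtain $(-1)^n s^*\omega$, and then ``promote'' the congruence to an equality because both $D\theta+\pi^*\psi$ and $(-1)^n s^*\omega$ are vertical. This inference is not valid: $D\theta$ and $d\theta$ differ by $d(\alpha\wedge\rho)$ for a correction term $\rho$ that you have not identified, and two vertical forms that agree modulo the contact ideal are not forced to be equal (every vertical form is congruent to zero modulo the ideal). In fact, your intermediate claim that $d\theta+\pi^*\psi\equiv(-1)^ns^*\omega$ modulo $\alpha\wedge\Omega^*$ would force $d\theta$ itself to be vertical (since the other three terms already are), which is not true in general and depends on the particular representative $\beta_\chi$. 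To make your approach work you would need to determine the Rumin correction explicitly---for instance by exhibiting $\rho$ such that $d(\theta+\alpha\wedge\rho)+\pi^*\psi=(-1)^ns^*\omega$---which is a substantially harder computation than what you have outlined. The paper's Morse-theoretic argument sidesteps this entirely by never touching the Rumin operator.
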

\begin{proof} The second relation is immediate. To prove the first it is enough to show that if $v$ is a smooth vector field with flow $F_t$ and $A \in \pol_c(M)$, then 
\begin{equation}\label{eq:variation}
 \ddtzero \nu(F_t(A)) = (-1)^n\int_{N(A)} v\without\,  s^*\omega.
\end{equation}
In fact it is enough to prove this in the case where $A$ is a smooth compact domain, and $v$ is outward pointing along the boundary of $A$ (given such $A$, any smooth vector field may be expressed as a difference of vector fields with this property). 

In this case there exists a Morse function $g \in C^\infty(M)$ and $\eps >0$  such that 
$$
F_t( A )= g\inv(-\infty,t] 
$$
whenever $|t|<\eps$, and all such $t$ are regular values of $g$.
Let $\tilde v$ denote a lift of $v$ to $S^*M$ that is tangent to the graph $[\Gamma]$ of $[dg]$ within the open set $\pi\inv g\inv(-\eps,\eps)$. Using Lemma \ref{lem:preimages} \eqref{item:pushdown intersect}, we deduce that
$$
[\Gamma ]\bullet \pi\inv g\inv(t) = N(g\inv(-\infty,t]).
$$
Thus, if $-\eps < a<b <\eps$, then by the coarea formula
\begin{equation}\label{eq:int over graph}
\int_{[\Gamma ]\bullet\pi\inv g\inv[a,b] }\gamma = \int_a^b\left( \int_{N(g\inv(-\infty,t])}  \tilde v \without \, \gamma \right)\, dt 
\end{equation}
for any $\gamma \in \Omega^n(S^*M)$.
 Thus by Lemma \ref{lem:morse2}, Lemma \ref{lem:basics} \eqref{eq:intersect integrate}, and \eqref{eq:int over graph}
\begin{align*}
  \ddtzero \nu(g\inv[0,t])&=\lim_{t\to 0} t\inv\left( \nu(g\inv(-\infty,t]) - \nu(g\inv(-\infty,0])\right)\\
  &=\lim_{t\to 0} t\inv \int_P (\chi(\varphi_p(X)\cap g\inv(-\infty,t])-\\
  & \quad\quad \chi(\varphi_p(X)\cap g\inv(-\infty,0])) \, dp\\
  &=(-1)^n\lim_{t\to 0} t\inv \int_P \#[([\Gamma] \cap \pi\inv g\inv(0,t]) \bullet sN(\varphi_p(X))] \, dp\\
   &=(-1)^n\lim_{t\to 0} t\inv \int_{[\Gamma] \cap \pi\inv g\inv(0,t] }s^*\omega \\
&= (-1)^n\int_{N(g\inv(-\infty,0]) }\tilde v\without \,s^*\omega.
\end{align*}
The desired identity now follows from conclusion \eqref{item:uniqueness} of  Proposition \ref{prop:unique}.
\end{proof}

\subsection{The main theorem}
 Let us fix the data $X,P,dp,\varphi$ and abbreviate $\nu_\mu:= \nu(\mu, X,P,dp,\varphi)$ for $\mu \in \V(M)$.
 \begin{theorem}\label{thm:main}  Given any $\Phi \in \C(M)$ the Alesker product $\nu_\mu \cdot \Phi$ is given by
\begin{equation}\label{eq:alt prod}
(\nu_\mu\cdot \Phi)(A,U) = \int_P (\mu\cdot \Phi)(A\cap \varphi_p(X), U\cap \varphi_p(X)) \, dp
\end{equation}
for any 
$ A \in \pol(M)$ and $ U\subset M $ relatively compact and Borel.
\end{theorem}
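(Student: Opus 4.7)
The argument parallels the proof of Theorem~\ref{thm:well defined}, with the valuation~$\mu$ replaced by the curvature measure~$\Phi$ and the additional localization to~$U$ carried through. By $\R$-linearity of both sides in $\Phi$ (via the surjection \eqref{eq:projections}) and in~$\mu$, I reduce to $\Phi=\meas{\beta',\gamma'}$ and $\mu=\lcur\beta,\gamma\rcur$. Theorem~\ref{thm:well defined}\eqref{item:well defined 1} gives that $A$ and $\varphi_p(X)$ intersect transversely for a.e.~$p$, so Theorem~\ref{thm:ab} yields $\mu\cdot\Phi=\meas{\bar\theta,\bar\psi}$ with $\bar\theta,\bar\psi$ explicit in $\Delta_\mu,\mathcal F_\mu,\beta',\gamma'$. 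Since $N(A\cap\varphi_p X)\subset\pi\inv\varphi_p X$, the right-hand side of the identity to be proved becomes
\[
\int_P\int_{N(A\cap\varphi_p X)\cap\pi\inv U}\bar\theta\,dp+\int_P\int_{A\cap\varphi_p X\cap U}\bar\psi\,dp.
\]

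\textbf{Expanding and matching.} I decompose $N(A\cap\varphi_p X)$ into the three disjoint pieces of Lemma~\ref{lem:transverse intersect}: $N(A)\cap\pi\inv\varphi_p X$ above the interior of~$\varphi_p X$, $N(\varphi_p X)\cap\pi\inv A$ above the interior of~$A$, and the arc piece $\zeta(\xi\inv N(A)\bullet\eta\inv\tilde\varphi_p N(X))$ with sign~$(-1)^n$. For the left-hand side I write $\nu_\mu=\lcur\theta,\psi\rcur$ via Theorem~\ref{thm:well defined}, extract $\Delta_{\nu_\mu}$ and $\mathcal F_{\nu_\mu}$ via Proposition~\ref{prop:unique}\eqref{item:ker2}, and plug these into Theorem~\ref{thm:ab} to obtain $(\nu_\mu\cdot\Phi)(A,U)=\int_{N(A)\cap\pi\inv U}\Theta+\int_{A\cap U}\Psi$. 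Expanding each side according to the summands of $\bar\theta,\bar\psi$ (respectively $\Theta,\Psi$), the matching proceeds by identifying: the $\mathcal F$-type summands via Fubini--Tonelli applied to $f(x)=dp(\{p:x\in\varphi_p X\})$; the $\Delta$-summands of type $\pi_*(\cdot\wedge s^*\Delta_\mu)$ via the fiber integration identity~\eqref{eq:intersect integrate 2} with $Y=\pi\inv A\cap\pi\inv U$ and $X=N(X)$; and the $\Delta$-summands of type $\xi_*(\zeta^*\cdot\wedge\eta^*s^*\Delta_\mu)$ via Lemma~\ref{lem:transverse intersect} combined with~\eqref{eq:intersect integrate 3}. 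This is the same accounting used in Theorem~\ref{thm:well defined}, enriched by the extra factors coming from~$\Phi$ and by the localization to~$U$, both of which pass through each step harmlessly.

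\textbf{Main obstacle.} The technical heart of the proof lies in the arc-against-arc contribution: on the left-hand side this originates in the summand $\xi_*(\zeta^*\beta'\wedge\eta^*s^*\Delta_{\nu_\mu})$ of~$\Theta$, but $\Delta_{\nu_\mu}$ itself contains a fiber integral over~$\Sigma$ arising from the explicit form of~$\theta$ in Theorem~\ref{thm:well defined}. After substitution one faces a doubly iterated fiber integral over two copies of~$\Sigma$. Unfolding it cleanly requires working in the completion~$\tilde\Sigma$ of Section~\ref{sect:completion} and appealing to the submersivity property~\eqref{item:corner fibers} of $(\xi\circ\sigma,\eta\circ\sigma)$, so that the composite is recognized as a single fiber integral over an appropriate combined fiber bundle. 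Careful sign bookkeeping, using the orientation conventions of Section~\ref{sect:general} and Lemmas~\ref{lem:oriented n} and~\ref{lem:transverse intersect}, then completes the identification, and the remaining contributions follow by the same arguments used in Theorem~\ref{thm:well defined}.
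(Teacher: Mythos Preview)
Your approach is genuinely different from the paper's, and considerably harder. The paper does not redo any of the fiber-integral computations. Instead it observes that Theorems~\ref{thm:ab}, \ref{thm:well defined}, and~\ref{thm:nu chi} together say precisely that
\[
\nu_\mu = \mu \cdot \nu_\chi:
\]
the explicit pair $(\theta,\psi)$ of Theorem~\ref{thm:well defined} is exactly what the Alesker--Bernig formula produces when one multiplies $\mu=\lcur\beta,\gamma\rcur$ by the valuation $\nu_\chi$, once one substitutes $\Delta_{\nu_\chi}=(-1)^n s^*\omega$ and $\mathcal F_{\nu_\chi}=f$ from Theorem~\ref{thm:nu chi}. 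With this identity in hand, the theorem reduces to a one-line manipulation using associativity and commutativity of the Alesker product:
\[
\nu_\mu\cdot\lambda \;=\; \mu\cdot\nu_\chi\cdot\lambda \;=\; (\mu\cdot\lambda)\cdot\nu_\chi \;=\; \nu_{\mu\cdot\lambda},
\]
and the right-hand side is by definition $A\mapsto\int_P(\mu\cdot\lambda)(A\cap\varphi_p(X))\,dp$. The passage from valuations $\lambda$ to curvature measures $\Phi$ is handled by the remark following Theorem~\ref{thm:ab}.

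Your direct-matching strategy could in principle succeed, but the ``main obstacle'' you single out is real and you have not resolved it: you would need $\Delta_{\nu_\mu}=D\theta+\pi^*\psi$ explicitly, with $D$ the Rumin differential applied to the already-complicated $\theta$ of \eqref{eq:smexp1}, and then to unfold a nested fiber integral over two copies of $\Sigma$. None of this is carried out in your sketch. The paper's insight is that this entire computation is unnecessary: the whole point of Theorem~\ref{thm:nu chi} is to package $\nu_\chi$ so that $\nu_\mu=\mu\cdot\nu_\chi$ becomes visible, after which the algebra of $\V(M)$ does the work.
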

\begin{proof} As in the remark following Theorem \ref{thm:ab}, it is enough to prove the corresponding statement with the curvature measure $\Phi$ replaced by a smooth valuation $\lambda$. From Theorems \ref{thm:ab}, \ref{thm:well defined}, and \ref{thm:nu chi} it follows that
$$
\nu_\mu =\mu \cdot \nu_\chi
$$
and therefore
$$
\nu_\mu \cdot \lambda = \mu \cdot \nu_\chi \cdot \lambda =  \mu \cdot \lambda \cdot \nu_\chi =\nu_{(\mu\cdot \lambda)} 
$$
as claimed.
\end{proof}

\end{document}